\newtheorem{assumption}{Assumption}[section]
\title{A Proximal-Gradient Method for Constrained Optimization\footnotemark[1]}
\author{
   Yutong Dai\!\! 
   \and Xiaoyi Qu\!\! 
   \and Daniel P.~Robinson\footnotemark[2]}
\date  {\today}
\begin{document}

\maketitle

\renewcommand{\thefootnote}{\fnsymbol{footnote}}
\footnotetext[1]{This work is supported by the U.S. Office of Naval Research under Award Number N00014-21-1-2532 and by the U.S.~National Science Foundation, Division of Mathematical Sciences, Computational Mathematics Program under Award Number DMS--2012243.}
\footnotetext[2]{All authors are from the Department of Industrial and Systems Engineering, Lehigh University, Bethlehem, PA, USA; E-mail: \email{yud319@lehigh.edu}, \email{xiq322@lehigh.edu}, \email{daniel.p.robinson@lehigh.edu}}
\renewcommand{\thefootnote}{\arabic{footnote}}

\begin{abstract}
  We present a new algorithm for solving optimization problems with objective functions that are the sum of a smooth function and a (potentially) nonsmooth regularization function, and nonlinear equality constraints.  The algorithm may be viewed as an extension of the well-known proximal-gradient method that is applicable when constraints are not present.  To account for nonlinear equality constraints, we combine a decomposition procedure for computing trial steps with an exact merit function for determining trial step acceptance.  Under common assumptions, we show that both the proximal parameter and merit function parameter eventually remain fixed, and then prove a worst-case complexity result for the maximum number of iterations before an iterate satisfying approximate first-order optimality conditions for a given tolerance is computed. Our preliminary numerical results indicate that our approach has great promise, especially in terms of returning approximate solutions that are structured (e.g., sparse solutions when a one-norm regularizer is used).







\end{abstract}

\begin{keywords}
  nonlinear optimization, nonconvex optimization, worst-case iteration complexity, regularization methods, sequential quadratic programming, sequential quadratic optimization
\end{keywords}

\begin{AMS}
  49M37, 65K05, 65K10, 65Y20, 68Q25, 90C30, 90C60
\end{AMS}

\numberwithin{equation}{section}
\numberwithin{theorem}{section}

\newcommand{\tauktrial}{\tau_{k,\text{trial}}}
\newcommand{\tautrialmin}{\tau_{\text{min,trial}}}
\newcommand{\xtrial}{x^{\text{trial}}}
\newcommand{\finf}{f_{\text{inf}}}
\newcommand{\phibar}{\bar\phi}
\newcommand{\alphamin}{\alpha_{\min}}
\newcommand{\taumin}{\tau_{\min}}
\newcommand{\sigmamin}{\sigma_{\min}}
\newcommand{\sigmamax}{\sigma_{\max}}
\newcommand{\lipg}{L_{g}}
\newcommand{\lipJ}{L_{J}}
\newcommand{\kappag}{\kappa_{\nabla f}}
\newcommand{\kappagr}{\kappa_{\partial r}}
\newcommand{\kappac}{\kappa_{c}}
\newcommand{\kappav}{\kappa_{v}}
\newcommand{\kappaJ}{\kappa_{\nabla c}}
\newcommand{\kappaPhi}{\kappa_{\Phi}}
\newcommand{\kappabarPhi}{\bar\kappa_{\Phi}}
\newcommand{\sigmabar}{\bar\sigma}
\newcommand{\epsilonbar}{\bar\epsilon}
\newcommand{\taubar}{\bar\tau}
\newcommand{\taubarmin}{\taubar_{\min}}
\newcommand{\alphabar}{\bar\alpha}
\newcommand{\alphabarmin}{\bar\alpha_{\min}}
\newcommand{\xstar}{x_*}
\newcommand{\ystar}{y_*}
\newcommand{\chibar}{\bar\chi}
\newcommand{\fus}{f_{\text{Algorithm~\ref{alg:main}}}}
\newcommand{\fBazinga}{f_{\text{Bazinga}}}

\section{Introduction}\label{sec.introduction}

In this paper we consider the problem
\begin{equation}\label{prob:main}
\min_{x\in\R{n}} f(x) + r(x) \ \ \text{subject to (s.t.)}  \ \ c(x) = 0,
\end{equation}
where $f:\R{n}\to \R{}$ is continuously differentiable, $r:\R{n} \to \R{}$ is convex (possibly nondifferentiable) and nonnegative valued, and $c:\R{n}\to\R{m}$ is continuously differentiable with $m \leq n$. The optimization problem~\eqref{prob:main} has  applications in model predictive control~\cite{annergren2012admm}, image processing~\cite{yuan2017ell}, nonsmooth optimization on a Steifel manifold~\cite{xiao2021penalty}, and low rank matrix completion~\cite{cai2010singular}. In addition, optimization problems such as sparse approximation, empirical risk minimization, and neural network modeling with mixed activations  can be reformulated as~\eqref{prob:main}; see~\cite{zhu2023first} for additional details.

When the regularizer $r$ is not present, the algorithms most commonly employed to solve problem~\eqref{prob:main} are penalty methods~\cite{ConGT91c,ConP77,CurJR13,FiaM90,MayM79,Rob13,zavala2014scalable} and sequential quadratic optimization (SQO) methods~\cite{Ani02,GouR10a,GouR10b,GurO89,han1977globally,han1979exact,MorNW12,powell2006fast}.  Penalty methods is to solve problem~\eqref{prob:main} by minimizing a sequence of unconstrained optimization subproblems defined in terms of $f$, a measure of constraint violation, and various parameters (e.g., Lagrange multiplier estimates and penalty parameters).  After each minimization subproblem in the sequence is solved, the parameters are updated in a manner that allows for convergence guarantees.  Since computing each subproblem minimizer may be expensive, and the number of subproblems solved may be nontrivial, penalty methods often require a significant amount of computation (e.g., numbers of iterations, function/derivative evaluations, and linear systems solved), which may be  prohibitive.
%
On the other hand, during each iteration of a line search SQO method, the main expense is the computation of a search direction, which is achieved by solving a single linear system of equations.  Equivalently, the search direction is the minimizer of a certain quadratic approximation of $f$ subject to a linearization of the constraints. SQO methods are generally viewed as the state-of-the-art because of their remarkable practical performance.  The superior performance of line search SQO methods over penalty methods can be attributed to two main sources.  First, line search SQO methods solve a sequence of linear systems rather than a sequence of general optimization subproblems.  Second, the search directions for SQO methods are designed to find a solution of problem~\eqref{prob:main} (again, when $r$ is not present), whereas penalty methods \emph{indirectly} aim to find a solution of problem~\eqref{prob:main} (again, when $r$ is not present) by adjusting its parameters after minimizing each subproblem.  

When the constraint $c(x) = 0$ is not present in problem~\eqref{prob:main}, 
the algorithms most commonly employed are variants of the proximal-gradient (PG) method~\cite{beck2017first,BecT09,CheCR17,CheCR18,KariNutiSchm16,lee2019inexact}. Each iterate of a basic PG method is the minimizer of a subproblem (i.e., the PG subproblem) formed by replacing $f$ in~\eqref{alg:main} by the sum of its first-order Taylor expansion (expanded at the current point) and a simple quadratic-regularization term.  For some commonly used regularizers, the PG subproblem has a closed-form solution, which is an attractive feature of such methods.  Moreover, since the regularizer $r$ is explicitly used in the definition of the PG subproblem (i.e., it is not approximated), the solutions generated by a PG method inherit the structure induced by the regularizer (e.g., if $r(x) = \|x\|_1$, then a PG method can produce sparse solution estimates).  This \emph{structure preserving} property is an important feature of PG methods when used to solve problem~\eqref{prob:main} (again, when the constraint $c(x) = 0$ is not present).

The work in this paper is motivated by both 
SQO methods for solving~\eqref{prob:main} when $r$ is not present, and the structure preserving property of PG methods for solving~\eqref{prob:main} when the constraint $c(x) = 0$ is not present.  In particular, we design and analyze a method for solving problem~\eqref{prob:main} based on subproblems that linearize the constraints (like SQO methods) and explicitly use the regularizer (like PG methods).



\subsection{Literature review}

We are aware of four papers, namely~\cite{de2023constrained, fukushima1990successive, liu2016smoothing, zhu2023first}, that present algorithms for minimizing  regularized optimization functions subject to nonlinear constraints.
The algorithms in~\cite{de2023constrained, zhu2023first} are penalty methods built upon the popular augmented Lagrangian function.  Therefore, both approaches have a penalty parameter and a vector of Lagrange multiplier estimates that balance the objective and constraint functions, and must be updated throughout the optimization procedure. We note that \cite{de2023constrained} can solve regularized optimization problems with both equality and inequality constraints, whereas the algorithm in~\cite{zhu2023first} can only handle  special classes of regularized optimization problems with constraints. The algorithms presented  in~\cite{fukushima1990successive, liu2016smoothing} are of the SQO variety. In~\cite{fukushima1990successive}, subgradient information of the nonsmooth function is used to formulate a sequence of min-max subproblems.  Since the regularizer is approximated in each subproblem, the structure preserving property of the iterates is lost. In contrast, \cite{liu2016smoothing} relies on a smoothing technique that approximates the nonsmooth term in the objective function and, thereafter, sequentially solves a convex quadratic problem with linear constraints. Unfortunately, in general, the smoothing technique  ruins the structure of the composite optimization problem, and consequently the structure preserving property is lost. 



\subsection{Contributions}

Our contributions relate to the proposal and analysis of a new algorithm for solving problem~\eqref{prob:main}, as we summarize next.

\begin{itemize}
\item We propose a PG-based algorithm for solving problem~\eqref{prob:main} that uses subproblems with linearized constraints (like SQO methods) and explicit regularization (like PG methods).  The method that results from this combination avoids the previously discussed challenges and weaknesses of augmented Lagrangian approaches, and provides solution estimates that are structure preserving.
During each iteration, we compute a trial step as the sum of two orthogonal directions called the normal and tangential steps.  First, the normal step is computed from a trust region subproblem designed to reduce the constraint violation.  Second, the tangential step is computed from a linearly constrained convex optimization subproblem with objective function reminiscent of PG methods (i.e., $r$ appears explicitly and a proximal term is used). Overall, the tangential step aims to reduce the objective function while maintaining the predicted progress in reducing infeasibility achieved by the normal step. The quality of the trial step, defined as the sum of the normal and tangential steps, is then determined by an $\ell_2$ merit function that uses a merit parameter to weight the objective function relative to the two-norm of the constraint violation.  The merit parameter and PG parameter (i.e., the weight on the proximal term) are reduced as the iterations proceed, if necessary, to promote convergence of the iterates to a solution of problem~\eqref{prob:main}.
\item Under minimal assumptions, we prove that a measure of first-order optimality for a feasibility problem converges to zero. Under additional commonly used assumptions, we prove that the merit parameter and PG parameter both remain uniformly bounded away from zero.  These results allow us to then prove that our algorithm 
generates a sequence of iterates such that any limit point is a KKT point (see Theorem~\ref{thm:limit-kkt}).  Moreover, we provide a worst-case complexity result for the maximum number of iterations before a certain criticality measure will be less than a given tolerance (see Theorem~\ref{thm:complexity}).
%
%
\item We present numerical experiments that verify our theoretical convergence results, and illustrate that our algorithm is capable of returning solutions that preserve the structure related to  $r$.  Specifically, we confirm that our method returns sparse solution estimates when $r$ is chosen as the $\ell_1$-norm function, which is known to be a sparsity-inducing regularizer.
\end{itemize}

\subsection{Notation and assumptions}

We use $\R{}$ to denote the set of real numbers (i.e., scalars), $\R{}_{\geq0}$ (resp.,~$\R{}_{>0}$) to denote the set of nonnegative (resp.,~positive) real numbers,~$\R{n}$ to denote the set of $n$-dimensional real vectors, and $\R{m \times n}$ to denote the set of $m$-by-$n$-dimensional real matrices.  The set of natural numbers is denoted as $\N{} := \{0,1,2,\dots\}$. Given a matrix $M\in\R{m\times n}$, we let $\sigmamin(M)$ (resp.,~$\sigmamax(M)$) denote the smallest (resp.,~largest) singular value of $M$.  For $v\in\R{n}$, we let $\|v\|_2 := \sqrt{\sum_{i=1}^n v_i^2}$ denote its two norm.  For a nonempty compact set $\Rcal\subset\R{n}$, we let $\|\Rcal\|_2 := \max\{\|s\|_2: s\in\Rcal\}$ denote its largest element measured in the two-norm.

The following assumption is used throughout the paper.

\bassumption\label{ass:basic}
  Let $\Xcal \subset \R{n}$ be an open convex set that contains the iterates $\{x_k\} \subset \R{n}$ and trial steps $\{x_k+s_k\}\subset \R{n}$ generated by Algorithm~\ref{alg:main}.  The function $f : \R{n} \to \R{}$ is continuously differentiable and bounded below over $\Xcal$ and its gradient function $\nabla f : \R{n} \to \R{n}$ is Lipschitz continuous and bounded over~$\Xcal$.  Similarly, the function $c : \R{n} \to \R{m}$ is continuously differentiable and bounded over $\Xcal$ and its Jacobian $\nabla c(x)^T$ is Lipschitz continuous and bounded over $\Xcal$. Finally, the function $r : \R{n} \to \R{}_{\geq 0}$ is convex and its subdifferential $\partial r: \R{n} \to \R{n}$ is bounded over $\Xcal$.  
\eassumption

Under Assumption~\ref{ass:basic}, there exist constants  $(\finf,\kappag,\kappagr,\kappac,\kappaJ,\lipg,\lipJ) \in \R{} \times \R{}_{>0} \times \R{}_{>0} \times \R{}_{>0} \times \R{}_{> 0} \times \R{}_{>0} \times \R{}_{> 0}$ such that for all $x\in\Xcal$ one has
\begin{equation}\label{eq:consequence-of-basic-assumption}
\begin{aligned}
f(x) &\geq \finf, & \ \ 
\|\nabla f(x)\|_2 &\leq \kappag, & \ \
\|\partial r(x)\|_2 &\leq \kappagr, \\ 
\|c(x)\|_2 &\leq \kappac, & \ \ 
\|\nabla c(x)^T\|_2 &\leq \kappaJ,
\end{aligned}
\end{equation}
and for all $(x,\xbar)\in\Xcal\times \Xcal$ one has
\begin{equation}\label{eq.Lipschitz}
  \|\nabla f(x) - \nabla f(\xbar)\|_2 \leq \lipg \|x - \xbar\|_2
  \ \ \text{and}\ \
  \|\nabla c(x)^T - \nabla c(\xbar)^T\|_2 \leq \lipJ \|x - \xbar\|_2.
\end{equation}

For convenience, we define $g(x) :=  \nabla f(x)$ and $J(x) := \nabla c(x)^T$. We append a natural number as a subscript for a quantity to denote its value during an iteration of an algorithm; i.e., we let $f_k := f(x_k)$, $g_k := g(x_k)$, $c_k := c(x_k)$, and $J_k := J(x_k)$.

\subsection{Organization}

In Section~\ref{sec:algorithm}, we propose our algorithm for solving problem~\eqref{prob:main}, and its  convergence properties are presented in Section~\ref{sec:analysis}.  In Section~\ref{sec:numerical}, we discuss our  numerical tests.  Final conclusions are provided in Section~\ref{sec:conclusion}.

\section{Algorithm}\label{sec:algorithm}

The algorithm that we propose for solving problem~\eqref{prob:main} is formally stated as Algorithm~\ref{alg:main}.  Given the $k$th iterate $x_k$, the $k$th PG parameter $\alpha_k$, and constant $\kappav \in \R{}_{> 0}$, we compute a step $v_k$ that aims to reduce the constraint infeasibility at $x_k$ as an approximate solution to the following problem:
\begin{equation}\label{prob:normal.step}
    \min_{v\in\R{n}} \ m_k(v)
    \ \st \ \|v\|_2 \leq 
    \kappav\alpha_k\|J_k^T c_k\|_2, \ \ \text{with} \ \ m_k(v) := \tfrac{1}{2}\|c_k+J_kv\|_2^2.
\end{equation}
The PG parameter $\alpha_k$ is used to define the trust-region constraint so that $\{v_k\} \to 0$ if $\{\alpha_k\} \to 0$. We consider a vector $v_k$ to be an adequate approximate solution to subproblem~\eqref{prob:normal.step} if, for some 
$\kappav\in \R{}_{>0}$, it satisfies the following  conditions:
\begin{subequations}\label{conds:vk}
\begin{align}
v_k &\in \Range(J_k^T), \label{conds:vk-1}\\
\|v_k\|_2 &\leq \kappav\alpha_k\|J_k^T c_k\|_2,
\ \text{and} \label{conds:vk-2}\\
\|c_k + J_k v_k\|_2 &\leq \|c_k+J_k v_k^c\|_2 \label{conds:vk-3}
\end{align}
\end{subequations}
where $v_k^c$ is the Cauchy point given by
\begin{equation}\label{def:vkc}
v_k^c:= -\beta_k^c J_k^T c_k
\ \ \text{with} \ \ 
\beta_k^c := \arg\min_{\beta\in\R{}} \ m_k(-\beta J_k^T c_k) \ \ \st \ \ 0 \leq \beta \leq \kappav\alpha_k. 
\end{equation}
In other words, the Cauchy point $v_k^c$ minimizes $m_k(v)$ along the direction $-\nabla m_k(0) = -J_k^T c_k$ and within $\{v: \|v\|_2 \leq \kappav\alpha_k\|J_k^T c_k\|_2\}$.  It is known (see~\cite{ConGT00a}) that $v_k^c$ satisfies 
\begin{equation}\label{eq:cauchy-decrease}
m_k(0) - m_k(v_k^c)
\geq \thalf \|J_k^T c_k\|_2^2 \min\left\{ \tfrac{1}{1+\|J_k^T J_k\|_2}, \kappav\alpha_k\right\}. 
\end{equation}
We note that the conditions~\eqref{conds:vk} are well-posed since they are satisfied by $v_k = v_k^c$.


Next, we compute a direction $u_k$ that maintains the level of linearized infeasibility achieved by $v_k$ while also reducing a model of the objective function.  Specifically, we compute $u_k$ as the unique solution to the strongly convex subproblem
\begin{equation}\label{prob:prox.sqp.relaxed}
\begin{aligned}
    u_k &:= \arg\min_{u\in\R{n}} \ g_k^T (v_k+u) + \tfrac{1}{2\alpha_k}\|v_k + u\|_2^2 + r(x_k+v_k + u) \ \ \text{s.t.} \ \ J_k u = 0. \\ 
    &\phantom{:}= \arg\min_{u\in\R{n}} \ (g_k+\tfrac{1}{\alpha_k}v_k)^T u + \tfrac{1}{2\alpha_k}\|u\|_2^2 + r(x_k+v_k + u) \ \ \text{s.t.} \ \ J_k u = 0 \\
    &\phantom{:}= \arg\min_{u\in\R{n}} \ g_k^T u + \tfrac{1}{2\alpha_k}\|u\|_2^2 + r(x_k+v_k + u) \ \ \text{s.t.} \ \ J_k u = 0
\end{aligned}
\end{equation}
where we used the fact that every $u$ feasible for~\eqref{prob:prox.sqp.relaxed} satisfies $v_k^T u = 0$ since $v_k\in\Range(J_k^T)$ (see~\eqref{conds:vk-1}).
The trial step $s_k$ is then defined as
\begin{equation}\label{def:sk}
s_k := v_k + u_k.
\end{equation}

We adopt the $\ell_2$ merit function, which for parameter $\tau \in\R{}_{>0}$ is defined as 
$$
\Phi_\tau(x) := \tau\big( f(x) + r(x)\big) + \|c(x)\|_2.
$$
During the $k$th iteration, we want to choose $\tau_k$ such that $\tau_k \leq \tau_{k-1}$ and $s_k$ is a direction of sufficient descent for the merit function $\Phi_{\tau_k}(\cdot)$ at $x_k$. To define an appropriate value for $\tau_k$, let us define the model of the merit function given by
$$
q_k(s,\tau)
:= \tau\big(f_k + g_k^T s + \tfrac{1}{2\alpha_k} \|s\|_2^2 + r(x_k+s)\big) + \|c_k + J_k s\|_2,
$$
as well as the change in the model 
\begin{equation}\label{def:Deltaell}
\begin{aligned}
\Delta q_k(s,\tau)
&:= q_k(0) - q_k(s) \\
&\phantom{:}= -\tau\big( g_k^T s  + \tfrac{1}{2\alpha_k} \|s\|_2^2 +  r(x_k+s) - r_k\big) + \|c_k\|_2 - \|c_k+J_k s\|_2.
\end{aligned}
\end{equation}
Then, with parameters $\sigma_c\in(0,1)$ and $\sigma_u \in (0,\thalf]$, we set $\sigmabar_u := \sigma_u + \thalf \in(\thalf,1]$ and
\begin{align*} 
\tauktrial 
\gets
\begin{cases}
\infty & \text{if $g_k^T s_k + \tfrac{\sigmabar_u\|s_k\|_2^2}{\alpha_k} + r(x_k+s_k) - r_k  \leq 0$,} \\
\frac{(1-\sigma_c)(\|c_k\|_2 - \|c_k+J_k v_k\|_2)}{g_k^T s_k + \tfrac{\sigmabar_u\|s_k\|_2^2}{\alpha_k} + r(x_k+s_k) - r_k} & \text{otherwise,}
\end{cases}
\end{align*}
and then set, with $\epsilon_\tau\in(0,1)$, the $k$th merit parameter value as
\begin{equation} \label{update:tau}
\tau_k \gets
\begin{cases}
\tau_{k-1} & \text{if $\tau_{k-1} \leq \tauktrial$,} \\
\min\{(1-\epsilon_\tau)\tau_{k-1},\tauktrial\} & \text{otherwise.}
\end{cases}
\end{equation}
This update ensures that if the merit parameter is decreased during the $k$th iteration, it is decreased by at least a fraction of its previous value.  Moreover, the value for $\tauktrial$ ensures that $\Delta q_k(s_k,\tau_k)$ is an upper bound for quantities related to measures of criticality for problem~\eqref{prob:main} (see Lemma~\ref{lem:Deltaqk-positive}).  Moreover, Lemma~\ref{lem:Deltaqk-positive} shows that $-\Delta q_k(s_k,\tau_k)$ is an upper bound for the directional derivative of $\Phi_{\tau_k}(\cdot)$ at $x_k$ in the direction $s_k$ (this result holds regardless of the value of the merit parameter). 

The $k$th iteration is completed by checking whether the merit function achieves sufficient decrease in Line~\ref{line:check-suff-decrease}, and then defining the next iterate and proximal parameter. Specifically, if sufficient decrease is observed in the merit function, then the trial step $s_k$ is accepted  (i.e., $x_{k+1} \gets x_k + s_k$) and the proximal parameter value is unchanged (i.e., $\alpha_{k+1} \gets \alpha_k$); otherwise, the trial step is rejected (i.e., $x_{k+1} \gets x_k$) and the proximal parameter value is decreased (i.e., $\alpha_{k+1} \gets \xi \alpha_k$ for some $\xi\in(0,1)$).  This updating scheme motivates the definition of the following index set:
\begin{equation}\label{def:S}
\Scal := \{k: x_{k+1} = x_k + s_k \},
\end{equation}
which contains the indices of the successful iterations associated with  Algorithm~\ref{alg:main}.

\balgorithm[!th]
\caption{A proximal-gradient algorithm for problem~\eqref{prob:main}.}
\label{alg:main}
\balgorithmic[1]
   \State \textbf{input:} $x_0\in\R{n}$, $\alpha_0 \in \R{}_{> 0}$, and $\tau_{-1} \in\R{}_{>0}$.
   \State \textbf{constants:} $\kappav\in\R{}_{>0}$, $\{ 
   \sigma_c,  \epsilon_\tau,\xi,\eta,\}\subset (0,1)$, and $\sigma_u\in(0,1/2]$. 
   \For{$k = 0,1,2,\dots$}
        \If{$J_k^T c_k \neq 0$}
           \State Compute $v_k$ as an approximate solution to~\eqref{prob:normal.step} satisfying~\eqref{conds:vk}.\label{line:vk}
        \Else
           \State Set $v_k \gets 0$.
           \If{$c_k \neq 0$}
               \State \textbf{return} $x_k$ (infeasible stationary point)\label{line:isp}
           \EndIf
        \EndIf   
        \State Compute $u_k$ as the unique solution to~\eqref{prob:prox.sqp.relaxed}. \label{line:uk}
        \State Set $s_k \gets v_k + u_k$. \label{line:sk}
        \If{$s_k = 0$}
           \State \textbf{return} $x_k$ (first-order KKT point)\label{line:kkt}
        \EndIf
        \State Compute $\tau_k$ using~\eqref{update:tau}.
        \State Compute $\Delta q_k(s_k,\tau_k)$ using~\eqref{def:Deltaell}.
        \If{$\Phi_{\tau_k}(x_k+s_k) \leq \Phi_{\tau_k}(x_k)  - \eta\Delta q_k(s_k,\tau_k)$}\label{line:check-suff-decrease}
           \State Set $x_{k+1} \gets x_k + s_k$ and $\alpha_{k+1} \gets \alpha_k$.
        \Else
           \State Set $x_{k+1} \gets x_k$ and $\alpha_{k+1} \gets \xi\alpha_k$.\label{line:alpha-decreased}
        \EndIf  
    \EndFor
    \ealgorithmic
\ealgorithm

\section{Analysis}\label{sec:analysis}

In this section, we prove convergence results for Algorithm~\ref{alg:main}.  Our first result shows that the normal step $v_k$ is zero if and only if  $J_k^T c_k$ is zero.


\begin{lemma}\label{lem:v-is-zero}
For all $k\in\N{}$, it holds that $v_k = 0$ if and only if $J_k^Tc_k = 0$.
\end{lemma}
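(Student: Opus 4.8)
The plan is to prove the two implications separately, relying only on the Cauchy decrease bound~\eqref{eq:cauchy-decrease} and the defining conditions~\eqref{conds:vk} for the normal step; no new machinery is needed.

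For the direction $J_k^T c_k = 0 \Rightarrow v_k = 0$, I would invoke the trust-region bound~\eqref{conds:vk-2}: since $\|v_k\|_2 \leq \kappav \alpha_k \|J_k^T c_k\|_2$ and the right-hand side vanishes when $J_k^T c_k = 0$ (consistent with the assignment $v_k \gets 0$ made by Algorithm~\ref{alg:main} in this case), we obtain $v_k = 0$ immediately.

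For the converse I would argue by contraposition: assume $J_k^T c_k \neq 0$ and show $v_k \neq 0$. Because $\alpha_k > 0$ and $J_k^T c_k \neq 0$, the quantity $\tfrac{1}{2}\|J_k^T c_k\|_2^2 \min\{(1+\|J_k^T J_k\|_2)^{-1}, \kappav \alpha_k\}$ on the right-hand side of~\eqref{eq:cauchy-decrease} is strictly positive, so that $m_k(v_k^c) < m_k(0)$. The acceptance condition~\eqref{conds:vk-3} then yields
\[
m_k(v_k) = \tfrac{1}{2}\|c_k + J_k v_k\|_2^2 \leq \tfrac{1}{2}\|c_k + J_k v_k^c\|_2^2 = m_k(v_k^c) < m_k(0).
\]
Since $m_k(0) = \tfrac{1}{2}\|c_k\|_2^2$ is exactly the value of $m_k$ at $v = 0$, the strict inequality $m_k(v_k) < m_k(0)$ precludes $v_k = 0$, which completes the contrapositive and hence the lemma.

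I expect there to be essentially no hard step: the statement is a direct consequence of the Cauchy decrease guarantee together with the conditions imposed on $v_k$. The only point requiring slight care is confirming that the right-hand side of~\eqref{eq:cauchy-decrease} is genuinely positive, which relies on both $\alpha_k > 0$ (a standing property of the proximal parameter throughout Algorithm~\ref{alg:main}) and the hypothesis $J_k^T c_k \neq 0$.
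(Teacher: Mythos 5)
Your proposal is correct and uses exactly the same ingredients as the paper's proof, namely the trust-region bound~\eqref{conds:vk-2} for one direction and the combination of~\eqref{conds:vk-3} with the Cauchy decrease~\eqref{eq:cauchy-decrease} for the other; the only cosmetic difference is that you phrase the second implication as a contrapositive while the paper assumes $v_k = 0$ and derives $J_k^T c_k = 0$ directly from the same chain of inequalities.
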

\begin{proof}
If $J_k^Tc_k = 0$, it follows from~\eqref{conds:vk-2} that $v_k =0$.
To prove the reverse implication, suppose that $v_k = 0$.  Then it follows from~\eqref{conds:vk-3} that $m_k(v_k) \leq m_k(v_k^c)$, which combined with~\eqref{eq:cauchy-decrease} and $v_k = 0$ 
shows that
$
0 = m_k(0) - m_k(v_k)
\geq m_k(0) - m_k(v_k^c)
\geq \thalf \|J_k^T c_k\|_2^2\min\left\{ \tfrac{1}{1+\|J_k^T J_k\|_2}, \kappav\alpha_k\right\}.
$
Since $\alpha_k > 0$ for all $k\in\N{}$ and $\kappav\in\R{}_{>0}$, it follows that $J_k^T c_k = 0$, completing the proof.
%
%
\end{proof}

Concerning the computation of the tangential step $u_k$, it follows from the optimality conditions for the convex optimization problem~\eqref{prob:prox.sqp.relaxed} that $u_k$ and the resulting $s_k = v_k + u_k$ satisfy, for some $g_{r,k} \in \partial r(x_k+s_k)$ and $y_k\in\R{m}$, the equalities
\begin{equation}~\label{kkt:u}
    g_k + \tfrac{1}{\alpha_k} u_k +  g_{r,k} - J_k^T y_k = 0
\ \ \text{and} \ \ 
J_k u_k = 0.
\end{equation}
Multiplying the first equality by $u_k^T$ and using the second equality, it follows that
\begin{equation}\label{eq:uT}
(g_k+g_{r,k})^Tu_k + \tfrac{1}{\alpha_k}\|u_k\|_2^2 = 0.  
\end{equation}
These equations related to the tangential step $u_k$ will be useful in the analysis. 

\subsection{Finite termination}

In this section we justify the finite termination conditions in Algorithm~\ref{alg:main} given in line~\ref{line:isp} and line~\ref{line:kkt}.  In particular, we show that if Algorithm~\ref{alg:main} terminates in line~\ref{line:isp} then $x_k$ is an infeasible stationary point, and if termination occurs in line~\ref{line:kkt} then $x_k$ is a first-order KKT point for problem~\eqref{prob:main}.

\begin{theorem}\label{thm:ft}
    The following finite termination results hold for Algorithm~\ref{alg:main}.
    \begin{itemize}
    \item[(i)] If termination occurs in line~\ref{line:isp} then $x_k$ is an infeasible stationary point, i.e., $x_k$ satisfies $c_k \neq 0$ and $J_k^T c_k = 0$.
    \item[(ii)] If termination occurs in line~\ref{line:kkt} then $x_k$ is a first-order KKT point for~\eqref{prob:main}.
    \end{itemize}
\end{theorem}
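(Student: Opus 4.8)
The plan is to treat the two parts separately, with part~(i) being essentially a reading of the control flow of Algorithm~\ref{alg:main} and part~(ii) requiring a short orthogonality argument followed by an appeal to the stored optimality system~\eqref{kkt:u}.

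For part~(i), I would simply trace the logic that leads to line~\ref{line:isp}. This line is reached only after the algorithm enters the branch in which $J_k^Tc_k = 0$ and sets $v_k \gets 0$, and then finds that $c_k \neq 0$. Hence both defining conditions, $J_k^Tc_k = 0$ and $c_k \neq 0$, hold by construction. I would close by noting that the gradient of the infeasibility measure $x \mapsto \tfrac12\|c(x)\|_2^2$ at $x_k$ equals $J_k^Tc_k$, so that $J_k^Tc_k = 0$ is precisely stationarity for that measure while $c_k \neq 0$ records infeasibility; together these justify calling $x_k$ an infeasible stationary point.

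For part~(ii), the key step is to deduce from $s_k = 0$ that \emph{both} $v_k = 0$ and $u_k = 0$. I would use the orthogonality of the normal and tangential steps: condition~\eqref{conds:vk-1} gives $v_k \in \Range(J_k^T)$, while the second equation in~\eqref{kkt:u} gives $J_k u_k = 0$, so that $u_k$ lies in the null space of $J_k$. Since the null space of $J_k$ is the orthogonal complement of $\Range(J_k^T)$, writing $0 = s_k = v_k + u_k$ and invoking uniqueness of the orthogonal decomposition of the zero vector forces $v_k = 0$ and $u_k = 0$. From here I would establish feasibility and stationarity. Feasibility follows because $v_k = 0$ combined with Lemma~\ref{lem:v-is-zero} yields $J_k^Tc_k = 0$; thus line~\ref{line:kkt} was reached through the branch that sets $v_k \gets 0$, and since the algorithm did not instead terminate at line~\ref{line:isp}, it must be that $c_k = 0$. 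For stationarity I would substitute $u_k = 0$ and $x_k + s_k = x_k$ into the first equation of~\eqref{kkt:u}, obtaining $g_k + g_{r,k} - J_k^Ty_k = 0$ for some $g_{r,k}\in\partial r(x_k)$ and $y_k\in\R{m}$. Recalling $g_k = \nabla f(x_k)$, this is exactly the stationarity portion of the first-order KKT conditions for~\eqref{prob:main} with multiplier $y_k$, and together with $c_k = 0$ it certifies that $x_k$ is a first-order KKT point.

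The only genuinely nonroutine step is the orthogonal-decomposition argument in part~(ii); the remainder is bookkeeping with the branching and with~\eqref{kkt:u}. I would take some care to confirm that line~\ref{line:kkt} cannot be reached via the branch with $J_k^Tc_k \neq 0$, since there $v_k \neq 0$ by Lemma~\ref{lem:v-is-zero}, contradicting the already-established $v_k = 0$; this is precisely what upgrades the conclusion from the mere stationarity $J_k^Tc_k = 0$ to full feasibility $c_k = 0$.
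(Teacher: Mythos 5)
Your proposal is correct and follows essentially the same route as the paper: part (i) is read off from the branching logic (the paper phrases it via Lemma~\ref{lem:v-is-zero} applied to $v_k=0$), and part (ii) uses the orthogonality $v_k^T u_k = 0$ to split $s_k = 0$ into $v_k = u_k = 0$, then invokes Lemma~\ref{lem:v-is-zero}, the non-occurrence of termination at line~\ref{line:isp}, and the optimality system~\eqref{kkt:u} exactly as the paper does. The extra care you take in justifying $v_k^T u_k = 0$ via $\Range(J_k^T) \perp \Null(J_k)$ is a detail the paper establishes earlier in the text following~\eqref{prob:prox.sqp.relaxed}.
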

\begin{proof}
To prove part (i), suppose that termination occurs in line~\ref{line:isp} so that $c_k  \neq 0$ and $v_k = 0$.  It follows from $v_k = 0$ and Lemma~\ref{lem:v-is-zero} that $J_k^Tc_k = 0$, as claimed.

To prove part (ii), suppose that termination occurs in line~\ref{line:kkt} so that $s_k = 0$.  Since by construction $v_k^T u_k = 0$, it also follows that $v_k = u_k = 0$.  It follows from $v_k = 0$ and Lemma~\ref{lem:v-is-zero} that $J_k^T c_k = 0$.  Since termination must not have occurred in line~\ref{line:isp}, we also know that $c_k = 0$.  It follows from $v_k = u_k = 0$ and~\eqref{kkt:u} that there exists $g_{r,k} \in \partial r(x_k+s_k) \equiv \partial r(x_k)$ and $y_k\in\R{m}$ so that $g_k + g_{r,k} - J_k^T y_k = 0$.  Combining this equality with $c_k = 0$ shows that $x_k$ is a first-order KKT point for problem~\eqref{prob:main}.
\end{proof}

Theorem~\ref{thm:ft} shows that if Algorithm~\ref{alg:main}  finitely terminates, then the vector $x_k$ returned has favorable properties.  Admittedly, although finite termination in line~\ref{line:isp} is not ideal, the existence of infeasible stationary points is something that every algorithm must contend with unless an appropriate constraint qualification is assumed.

\subsection{Non-finite termination}

In this section, we study the convergence properties of Algorithm~\ref{alg:main} when finite termination does not occur.   Therefore, given how Algorithm~\ref{alg:main}{ is constructed, we know in this section that, for all $k\in\N{}$, it holds that
\begin{equation}\label{eq:s-nonzero}
(i) \ s_k \neq 0 \ \ \text{and} \ \ 
(ii) \ \text{$J_k^T c_k = 0$ if and only if $c_k = 0$.}
\end{equation}
%
%
%





Our first goal is to prove a bound on the directional derivative of $\Phi_\tau(\cdot)$ at $x_k$ in the direction $s_k$. Given the Lipschitz constants $\lipg$ and~$\lipJ$ in Assumption~\ref{ass:basic}, it follows for all $t \in \R{}_{>0}$ from~\cite[equation~(19)]{CurtRobiZhou23} that
\begin{equation}\label{lip-consts}
\begin{aligned}
  f(x_k + t s_k) &\leq f_k + t g_k^T s_k + \tfrac{\lipg}{2} t^2\|s_k\|_2^2
  \ \ \text{and} \\ 
  \|c(x_k + t s_k)\|_2 &\leq \|c_k + t J_k s_k\|_2 + \tfrac{\lipJ}{2} t^2\|s_k\|_2^2. 
\end{aligned}
\end{equation}
The next result gives an upper bound on  the quantity $D_{\Phi_\tau}(x_k,s_k)$, which we use to denote 
the directional derivative of 
$\Phi_\tau(\cdot)$ at $x_k$ in the direction $s_k$.
\begin{lemma}\label{lem:dd-bound}
The directional derivative of the merit function satisfies
$$
D_{\Phi_\tau}(x_k,s_k)
\leq \tau \big( g_k^T s_k + r(x_k+s_k) - r_k \big) + \|c_k+J_ks_k\|_2 - \|c_k\|_2.
$$
\end{lemma}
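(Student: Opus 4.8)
The plan is to exploit the additive structure of the merit function, writing $\Phi_\tau = \tau f + \tau r + \|c(\cdot)\|_2$, and to compute or bound the directional derivative of each of the three pieces separately, since $D_{\Phi_\tau}(x_k,s_k)$ exists and equals the sum of the three corresponding directional derivatives (each exists: $f$ is differentiable, $r$ is convex, and $\|c(\cdot)\|_2$ is a convex function composed with a differentiable map). The smooth term contributes exactly $\tau g_k^T s_k$, so the real work is to produce the correct upper bounds for the two nonsmooth contributions $\tau r$ and $\|c(\cdot)\|_2$, each of which I would obtain from a convexity argument.

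First I would handle the regularizer. By Assumption~\ref{ass:basic}, $r$ is convex, so the scalar map $t \mapsto r(x_k + t s_k)$ is convex and its forward difference quotient $(r(x_k + t s_k) - r_k)/t$ is nondecreasing in $t > 0$. Hence the one-sided directional derivative of $r$ at $x_k$ along $s_k$ equals $\inf_{t>0}(r(x_k + t s_k) - r_k)/t$, which is bounded above by the value of the quotient at $t = 1$, namely $r(x_k + s_k) - r_k$. Multiplying by $\tau$ yields the $\tau\big(r(x_k+s_k) - r_k\big)$ term.

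Next I would handle $\|c(\cdot)\|_2$. Because $c$ is continuously differentiable, the first-order expansion $c(x_k + t s_k) = c_k + t J_k s_k + o(t)$ holds, and the bound on $\|c(x_k + t s_k)\|_2$ recorded in~\eqref{lip-consts} quantifies the remainder by $\tfrac{\lipJ}{2} t^2 \|s_k\|_2^2$. Since $\|\cdot\|_2$ is $1$-Lipschitz, replacing the nonlinear $c(x_k + t s_k)$ by its linearization $c_k + t J_k s_k$ inside the norm perturbs $\|c(x_k + t s_k)\|_2$ by at most $o(t)$, so the directional derivative of $\|c(\cdot)\|_2$ at $x_k$ along $s_k$ equals $\lim_{t\downarrow 0}(\|c_k + t J_k s_k\|_2 - \|c_k\|_2)/t$. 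Applying convexity of $t \mapsto \|c_k + t J_k s_k\|_2$ exactly as for $r$, this limit is at most the value of the quotient at $t = 1$, i.e.\ $\|c_k + J_k s_k\|_2 - \|c_k\|_2$. Adding the three contributions gives the claimed inequality.

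The one step I would be most careful about is the composition in $\|c(\cdot)\|_2$: I must justify both that passing from $c(x_k + t s_k)$ to its linearization inside the norm costs only $o(t)$ (so the two difference quotients share the same limit) and that the convexity-based replacement of that limit by the $t=1$ quotient produces an inequality in the correct direction. The bound in~\eqref{lip-consts} settles the former and monotonicity of convex difference quotients settles the latter; the remaining manipulations are routine.
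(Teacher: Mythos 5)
Your proposal is correct and follows essentially the same route as the paper: the regularizer term is handled by the monotonicity of convex difference quotients (the paper cites \cite[Theorem~2.25]{BagiKarmMake14} for exactly the bound $D_r(x_k,s_k) \leq r(x_k+s_k)-r_k$), and the constraint term is handled via the quadratic remainder bound in~\eqref{lip-consts} together with convexity of the norm along the linearized path (the paper's triangle-inequality step $\|c_k+tJ_ks_k\|_2 \leq t\|c_k+J_ks_k\|_2+(1-t)\|c_k\|_2$ is precisely your monotone-difference-quotient argument evaluated between $t$ and $1$). No gaps.
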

\begin{proof}
For all $t \in\R{}_{>0}$, it follows from~\eqref{lip-consts} and the triangle inequality that
\begin{align*}
\|c(x_k+ts_k)\|_2 - \|c_k\|_2
&\leq \|c_k+tJ_ks_k\|_2 - \|c_k\|_2 + \tfrac{\lipJ}{2} t^2\|s_k\|_2^2 \\
&\leq t\|c_k+J_ks_k\|_2 + (1-t)\|c_k\|_2
 - \|c_k\|_2 + \tfrac{\lipJ}{2} t^2\|s_k\|_2^2 \\
&= t\|c_k+J_ks_k\|_2 - t\|c_k\|_2 + \tfrac{\lipJ}{2} t^2\|s_k\|_2^2.
\end{align*}
On the other hand, it follows from~\cite[Theorem~2.25]{BagiKarmMake14} that $D_r(x_k,s_k) \leq r(x_k+s_k) - r(x_k)$.  The conclusion follows from this result, the previous displayed equation after dividing by $t$ and taking the limit $t \searrow 0$, and the fact that $f$ is differentiable. 
\end{proof}

Combining the previous lemma with how the merit parameter $\tau_k$ is defined, allows us to prove that the change in the model $q_k(s_k,\tau_k)$ is an upper bound for quantities used in our ultimate convergence result.

\begin{lemma}\label{lem:Deltaqk-positive}
The choice of $\tau_k$ in~\eqref{update:tau} ensures that the direction $s_k$ satisfies
\begin{align*}
\Delta q_k(s_k,\tau_k)
&\geq \tfrac{\sigma_u\tau_k}{\alpha_k}\|s_k\|_2^2 + \sigma_c\big( \|c_k\|_2 - \|c_k + J_k v_k\|_2\big)
> 0 
\ \ \text{and} \\
D_{\Phi_{\tau_k}}(x_k,s_k)
&\leq - \tfrac{\sigma_u\tau_k}{\alpha_k}\|s_k\|_2^2 - \sigma_c\big( \|c_k\|_2 - \|c_k + J_k v_k\|_2\big)
< 0.
\end{align*}
\end{lemma}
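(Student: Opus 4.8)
The plan is to establish the first (model-decrease) inequality and then to derive the second (directional-derivative) inequality as a consequence of it together with Lemma~\ref{lem:dd-bound}. The essential preliminary observation is that the tangential step satisfies $J_k u_k = 0$ by~\eqref{kkt:u}, so that $J_k s_k = J_k v_k$ and hence $\|c_k + J_k s_k\|_2 = \|c_k + J_k v_k\|_2$. Substituting this identity into the definition of $\Delta q_k$ in~\eqref{def:Deltaell}, I would show that the claimed lower bound is equivalent to the single scalar inequality
$$
\tau_k\Big(g_k^T s_k + \tfrac{\sigmabar_u}{\alpha_k}\|s_k\|_2^2 + r(x_k+s_k) - r_k\Big) \leq (1-\sigma_c)\big(\|c_k\|_2 - \|c_k+J_k v_k\|_2\big),
$$
where the coefficient $\sigmabar_u = \sigma_u + \thalf$ arises precisely from combining the proximal term $\tfrac{1}{2\alpha_k}\|s_k\|_2^2$ carried by $\Delta q_k$ with the target term $\tfrac{\sigma_u}{\alpha_k}\|s_k\|_2^2$. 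Making this algebraic bookkeeping line up exactly with the numerator used to define $\tauktrial$ is the one step that requires care, and I expect it to be the main (though minor) obstacle; everything downstream is routine.

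Next I would prove the displayed scalar inequality by a two-case argument driven by the sign of $\Psi_k := g_k^T s_k + \tfrac{\sigmabar_u}{\alpha_k}\|s_k\|_2^2 + r(x_k+s_k) - r_k$, in both cases using that the update~\eqref{update:tau} always yields $\tau_k \leq \tauktrial$. If $\Psi_k \leq 0$, the left-hand side is nonpositive while the right-hand side is nonnegative, the latter because combining the Cauchy-point condition~\eqref{conds:vk-3} with the Cauchy decrease~\eqref{eq:cauchy-decrease} gives $\|c_k+J_k v_k\|_2 \leq \|c_k+J_k v_k^c\|_2 \leq \|c_k\|_2$. If $\Psi_k > 0$, then $\tauktrial$ equals $(1-\sigma_c)(\|c_k\|_2 - \|c_k+J_k v_k\|_2)/\Psi_k$, so $\tau_k\Psi_k \leq \tauktrial\Psi_k$ delivers the inequality by construction. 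Strict positivity of $\Delta q_k(s_k,\tau_k)$ then follows immediately: since we are in the non-finite-termination regime we have $s_k \neq 0$ by~\eqref{eq:s-nonzero}, so the term $\tfrac{\sigma_u\tau_k}{\alpha_k}\|s_k\|_2^2$ is strictly positive while the remaining term is nonnegative.

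For the second inequality I would start from Lemma~\ref{lem:dd-bound}, again replacing $\|c_k+J_k s_k\|_2$ by $\|c_k+J_k v_k\|_2$. A direct comparison with the definition of $\Delta q_k$ shows that the resulting upper bound equals $-\Delta q_k(s_k,\tau_k) - \tfrac{\tau_k}{2\alpha_k}\|s_k\|_2^2$, whence $D_{\Phi_{\tau_k}}(x_k,s_k) \leq -\Delta q_k(s_k,\tau_k)$ because the dropped proximal term is nonnegative. Applying the first inequality already established then yields the claimed bound, and the strict negativity is inherited from the strict positivity of $\Delta q_k(s_k,\tau_k)$.
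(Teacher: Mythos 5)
Your proposal is correct and follows essentially the same route as the paper's (much terser) proof: use $J_k u_k = 0$ to replace $\|c_k+J_k s_k\|_2$ by $\|c_k+J_k v_k\|_2$, reduce the first bound to $\tau_k \leq \tauktrial$ via the definition of $\tauktrial$ (your two-case argument on the sign of $\Psi_k$ is exactly the content of that definition), and obtain the second bound from Lemma~\ref{lem:dd-bound} by discarding the nonnegative proximal term. The only implicit ingredient, $\tau_k>0$, is used the same way the paper uses it, so there is nothing to change.
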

\begin{proof}
The first result follows from~\eqref{update:tau}, definition of $\tauktrial$, and $J_ks_k = J_k v_k$ (recall that $J_k u_k = 0$ because of the constraint in~\eqref{prob:prox.sqp.relaxed}). The second result follows from Lemma~\ref{lem:dd-bound}, $\tfrac{1}{\alpha_k}\|s_k\|_2^2 \geq 0$, and the first result of this lemma.
\end{proof}

We now give a sufficient condition for a successful iteration (see~\eqref{def:S}) to occur.

\begin{lemma}\label{cond:for-k-in-S}
If $(1-\eta)\Delta q_k(s_k,\tau_k) \geq \thalf(-\tfrac{\tau_k}{\alpha_k} +  \tau_k \lipg + \lipJ)\|s_k\|_2^2$, then $k\in\Scal$.
\end{lemma}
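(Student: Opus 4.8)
The plan is to show that, under the stated hypothesis, the sufficient-decrease test in Line~\ref{line:check-suff-decrease} is passed; by the definition of $\Scal$ in~\eqref{def:S}, this is exactly what it means for $k\in\Scal$. Hence it suffices to verify the acceptance inequality $\Phi_{\tau_k}(x_k+s_k) \le \Phi_{\tau_k}(x_k) - \eta\,\Delta q_k(s_k,\tau_k)$.

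First I would expand the merit reduction $\Phi_{\tau_k}(x_k+s_k)-\Phi_{\tau_k}(x_k)$ directly from the definition of $\Phi_{\tau_k}$, splitting it into the change in $\tau_k f$, the change in $\tau_k r$, and the change in $\|c(\cdot)\|_2$. Into this I would substitute the two Lipschitz estimates in~\eqref{lip-consts} evaluated at $t=1$, bounding $f(x_k+s_k)$ by $f_k + g_k^Ts_k + \tfrac{\lipg}{2}\|s_k\|_2^2$ and $\|c(x_k+s_k)\|_2$ by $\|c_k+J_ks_k\|_2 + \tfrac{\lipJ}{2}\|s_k\|_2^2$, leaving the term $r(x_k+s_k)-r_k$ untouched.

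The key observation is that the resulting upper bound reassembles into $-\Delta q_k(s_k,\tau_k)$ plus a single quadratic remainder. Comparing with the definition of $\Delta q_k$ in~\eqref{def:Deltaell}, the linear, regularizer, and linearized-constraint terms match exactly, while the proximal term requires care: $\Delta q_k$ carries $\tfrac{\tau_k}{2\alpha_k}\|s_k\|_2^2$ with a one-half, and this combines with the two Lipschitz quadratics $\tfrac{\tau_k\lipg}{2}\|s_k\|_2^2$ and $\tfrac{\lipJ}{2}\|s_k\|_2^2$ to produce precisely the coefficient $\thalf\big(-\tfrac{\tau_k}{\alpha_k}+\tau_k\lipg+\lipJ\big)$. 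Getting this factor-of-two bookkeeping right is the only delicate point; otherwise the manipulation is routine. The outcome is
$$
\Phi_{\tau_k}(x_k+s_k)-\Phi_{\tau_k}(x_k) \le -\Delta q_k(s_k,\tau_k) + \thalf\big(-\tfrac{\tau_k}{\alpha_k}+\tau_k\lipg+\lipJ\big)\|s_k\|_2^2.
$$

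Finally I would invoke the hypothesis to absorb the quadratic remainder: since $\thalf\big(-\tfrac{\tau_k}{\alpha_k}+\tau_k\lipg+\lipJ\big)\|s_k\|_2^2 \le (1-\eta)\Delta q_k(s_k,\tau_k)$, the right-hand side above is at most $-\Delta q_k(s_k,\tau_k) + (1-\eta)\Delta q_k(s_k,\tau_k) = -\eta\,\Delta q_k(s_k,\tau_k)$. This is exactly the acceptance inequality tested in Line~\ref{line:check-suff-decrease}, so $x_{k+1}=x_k+s_k$ and therefore $k\in\Scal$, as desired. I note that the argument is purely algebraic and does not use the sign of $\Delta q_k$, although Lemma~\ref{lem:Deltaqk-positive} guarantees $\Delta q_k(s_k,\tau_k)>0$, which ensures the hypothesis is a meaningful (nonvacuous) condition.
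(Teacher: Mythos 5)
Your proposal is correct and matches the paper's own proof essentially line for line: expand the merit decrease, apply the two Lipschitz bounds in~\eqref{lip-consts} at $t=1$, recognize $-\Delta q_k(s_k,\tau_k)$ plus the quadratic remainder $\thalf\big(-\tfrac{\tau_k}{\alpha_k}+\tau_k\lipg+\lipJ\big)\|s_k\|_2^2$, and absorb the latter using the hypothesis to obtain the acceptance test of Line~\ref{line:check-suff-decrease}. No gaps.
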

\begin{proof}
It follows from~\eqref{lip-consts}, \eqref{def:Deltaell}, and the assumed inequality in this lemma that
\begin{align*}
&\phi_{\tau_k}(x_k+s_k) - \phi_{\tau_k}(x_k) \\
&= \tau_k\big( f(x_k+s_k) + r(x_k+s_k)\big) + \|c(x_k+s_k)\|_2 - \tau_k\big( f_k + r_k\big) - \|c_k\|_2.\\
&\leq \tau_k g_k^T s_k + \tau_k\big(r(x_k+s_k) - r_k \big) + \|c_k+J_ks_k\|_2 - \|c_k\|_2 + \thalf(\tau_k \lipg+\lipJ)\|s_k\|_2^2 \\
&= -\Delta q_k(s_k,\tau_k) - \tfrac{\tau_k}{2\alpha_k}\|s_k\|_2^2 + \thalf(\tau_k \lipg+\lipJ)\|s_k\|_2^2 \\
&= -\Delta q_k(s_k,\tau_k) + \thalf \big( -\tfrac{\tau_k}{\alpha_k} + \tau_k \lipg + \lipJ \big)\|s_k\|_2^2 
\leq -\eta \Delta q_k(s_k,\tau_k). 
\end{align*}
Therefore, it follows from Line~\ref{line:check-suff-decrease} of Algorithm~\ref{alg:main} that $k\in\Scal$, as claimed.
\end{proof}

The following result gives a bound on the decrease in linearized feasibility achieved by $s_k$ that is similar to that achieved by the Cauchy point in~\eqref{eq:cauchy-decrease}.
\begin{lemma}~\label{lem:constraint-violation-change}
The step $s_k = v_k + u_k$ satisfies
$$
\|c_k\|_2 - \|c_k + J_k s_k\|_2
= \|c_k\|_2 - \|c_k + J_k v_k\|_2
\geq \tfrac{1}{2\kappa_c}\|J_k^T c_k\|_2^2 \min\left\{\tfrac{1}{1+\kappaJ^2}, \kappav\alpha_k\right\}.
$$
\end{lemma}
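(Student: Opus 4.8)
The plan is to reduce the claimed inequality to the Cauchy decrease bound \eqref{eq:cauchy-decrease} by controlling the quantity $\|c_k\|_2 - \|c_k + J_k v_k\|_2$ from below in terms of the model decrease $m_k(0) - m_k(v_k)$. First I would establish the equality $\|c_k\|_2 - \|c_k + J_k s_k\|_2 = \|c_k\|_2 - \|c_k + J_k v_k\|_2$, which is immediate from $J_k u_k = 0$ (the constraint in \eqref{prob:prox.sqp.relaxed}) and $s_k = v_k + u_k$, so that $J_k s_k = J_k v_k$. The substantive content is therefore the lower bound on $\|c_k\|_2 - \|c_k + J_k v_k\|_2$.

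\medskip

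The key idea is to pass from the squared (model) quantities in \eqref{eq:cauchy-decrease} to the unsquared norm quantities that appear in the merit function. Note $m_k(0) - m_k(v_k) = \tfrac12(\|c_k\|_2^2 - \|c_k + J_k v_k\|_2^2) = \tfrac12(\|c_k\|_2 - \|c_k + J_k v_k\|_2)(\|c_k\|_2 + \|c_k + J_k v_k\|_2)$. Since $v_k$ satisfies condition \eqref{conds:vk-3}, namely $\|c_k + J_k v_k\|_2 \leq \|c_k + J_k v_k^c\|_2$, we have $m_k(v_k) \leq m_k(v_k^c)$ and hence $m_k(0) - m_k(v_k) \geq m_k(0) - m_k(v_k^c)$, to which \eqref{eq:cauchy-decrease} applies. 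To convert the left-hand side into the unsquared difference, I would bound the factor $\|c_k\|_2 + \|c_k + J_k v_k\|_2$ from above. The natural bound is $\|c_k\|_2 + \|c_k + J_k v_k\|_2 \leq 2\|c_k\|_2 \leq 2\kappa_c$, using $\|c_k + J_k v_k\|_2 \leq \|c_k\|_2$ (which follows since the model decrease is nonnegative, so $\|c_k + J_k v_k\|_2 \leq \|c_k\|_2$) together with the bound $\|c_k\|_2 \leq \kappa_c$ from \eqref{eq:consequence-of-basic-assumption}. Combining these gives
$$
\|c_k\|_2 - \|c_k + J_k v_k\|_2 = \frac{2\big(m_k(0) - m_k(v_k)\big)}{\|c_k\|_2 + \|c_k + J_k v_k\|_2} \geq \frac{m_k(0) - m_k(v_k^c)}{\kappa_c} \geq \frac{1}{2\kappa_c}\|J_k^T c_k\|_2^2 \min\left\{\tfrac{1}{1+\|J_k^T J_k\|_2}, \kappav\alpha_k\right\}.
$$
Finally I would replace $\|J_k^T J_k\|_2$ by $\kappaJ^2$ using $\|J_k^T J_k\|_2 = \|J_k\|_2^2 \leq \kappaJ^2$ from \eqref{eq:consequence-of-basic-assumption}; since the minimum is monotone in the first argument and $\tfrac{1}{1+\kappaJ^2} \leq \tfrac{1}{1+\|J_k^TJ_k\|_2}$, the bound only weakens in the allowed direction, yielding exactly the stated right-hand side.

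\medskip

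The main obstacle, though minor, is handling the denominator $\|c_k\|_2 + \|c_k + J_k v_k\|_2$ cleanly and making sure the conversion from squared to unsquared norms does not accidentally degenerate: one must verify $\|c_k + J_k v_k\|_2 \leq \|c_k\|_2$ so that the $2\kappa_c$ upper bound on the denominator is legitimate, and also confirm the argument is vacuous or trivially valid when $J_k^T c_k = 0$ (in which case both sides are zero by Lemma~\ref{lem:v-is-zero}). The only genuine care needed is ensuring the monotonicity substitution of $\kappaJ^2$ for $\|J_k^T J_k\|_2$ goes the correct way; everything else is a direct chain of \eqref{eq:cauchy-decrease}, \eqref{conds:vk-3}, and the uniform bounds in \eqref{eq:consequence-of-basic-assumption}.
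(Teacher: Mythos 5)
Your proposal is correct and follows essentially the same route as the paper's proof: both factor the Cauchy model decrease as a difference of squares, bound the sum factor by $2\|c_k\|_2 \leq 2\kappa_c$, invoke \eqref{eq:cauchy-decrease} together with \eqref{conds:vk-3}, and finish with $\|J_k^TJ_k\|_2 \leq \kappaJ^2$ and $J_ks_k = J_kv_k$. The only cosmetic difference is that you factor at $v_k$ and apply \eqref{conds:vk-3} early, whereas the paper factors at $v_k^c$ and applies it at the end.
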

\begin{proof}
From~\eqref{eq:consequence-of-basic-assumption}, \eqref{eq:cauchy-decrease}, \eqref{conds:vk-3}, 
and the constraint in~\eqref{prob:prox.sqp.relaxed}, we have
\begin{equation*}
\begin{aligned}
\thalf \|J_k^T c_k\|_2^2 &\min\left\{ \tfrac{1}{1+\kappaJ^2}, \kappav\alpha_k\right\} \\
&\leq \thalf \|J_k^T c_k\|_2^2 \min\left\{ \tfrac{1}{1+\|J_k^T J_k\|_2}, \kappav\alpha_k\right\} \\
&\leq m_k(0) - m_k(v_k^c) 
= \thalf\big(\|c_k\|_2^2 - \|c_k+J_kv_k^c\|_2^2 \big) \\
&= \thalf\big(\|c_k\|_2 + \|c_k+J_kv_k^c\|_2\big)\big(\|c_k\|_2 - \|c_k+J_kv_k^c\| _2\big) \\
&\leq \|c_k\|_2\big(\|c_k\|_2 - \|c_k+J_kv_k^c\| _2\big) \\
&\leq \kappa_c\big( \|c_k\|_2 - \|c_k + J_k v_k\|_2 \big) 
= \kappa_c \big( \|c_k\|_2 - \|c_k + J_k s_k\|_2 \big),
\end{aligned}
\end{equation*}
from which the desired result follows.
\end{proof}

We now begin investigating quantities related to the merit parameter.  The following result bounds the denominator in the definition of $\tauktrial$.

\begin{lemma}\label{lem:den-bound}
For all $k\in\N{}$, it follows that
$$
g_k^T s_k + \tfrac{\sigmabar_u\|s_k\|_2^2}{\alpha_k} + r(x_k+s_k) - r_k
\leq 
(\kappag + \kappagr)\|v_k\|_2 + \tfrac{\sigmabar_u\|v_k\|_2^2}{\alpha_k}.
$$
\end{lemma}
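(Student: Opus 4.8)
The plan is to exploit the orthogonality of the normal and tangential steps together with the optimality conditions for $u_k$. First, since $v_k \in \Range(J_k^T)$ by~\eqref{conds:vk-1} and $J_k u_k = 0$ by the constraint in~\eqref{prob:prox.sqp.relaxed}, the steps satisfy $v_k^T u_k = 0$, so that $\|s_k\|_2^2 = \|v_k\|_2^2 + \|u_k\|_2^2$. This lets me split the quadratic term as $\tfrac{\sigmabar_u}{\alpha_k}\|s_k\|_2^2 = \tfrac{\sigmabar_u}{\alpha_k}\|v_k\|_2^2 + \tfrac{\sigmabar_u}{\alpha_k}\|u_k\|_2^2$, and I also write $g_k^T s_k = g_k^T v_k + g_k^T u_k$.

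Next I would handle the regularizer difference using convexity. Since $g_{r,k} \in \partial r(x_k+s_k)$ is the subgradient appearing in~\eqref{kkt:u}, the subgradient inequality evaluated at $x_k$ gives $r_k \geq r(x_k+s_k) + g_{r,k}^T(x_k - (x_k+s_k))$, i.e.\ $r(x_k+s_k) - r_k \leq g_{r,k}^T s_k = g_{r,k}^T v_k + g_{r,k}^T u_k$. Taking the subgradient at $x_k+s_k$ (rather than at $x_k$) is the key move, because it produces exactly the cross term that pairs with the optimality relation~\eqref{eq:uT}.

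With these substitutions in place, the left-hand side is bounded above by the sum of the $u_k$-terms $(g_k+g_{r,k})^T u_k + \tfrac{\sigmabar_u}{\alpha_k}\|u_k\|_2^2$ and the $v_k$-terms $(g_k+g_{r,k})^T v_k + \tfrac{\sigmabar_u}{\alpha_k}\|v_k\|_2^2$. For the $u_k$-terms I substitute~\eqref{eq:uT}, namely $(g_k+g_{r,k})^T u_k = -\tfrac{1}{\alpha_k}\|u_k\|_2^2$, reducing them to $\tfrac{\sigmabar_u-1}{\alpha_k}\|u_k\|_2^2$; since $\sigmabar_u = \sigma_u + \thalf \in (\thalf,1]$, the coefficient $\sigmabar_u-1 \leq 0$, so these terms are nonpositive and may be discarded. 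For the $v_k$-terms I apply Cauchy--Schwarz and the triangle inequality together with the uniform bounds $\|g_k\|_2 \leq \kappag$ and $\|g_{r,k}\|_2 \leq \kappagr$ from~\eqref{eq:consequence-of-basic-assumption} to get $(g_k+g_{r,k})^T v_k \leq (\kappag+\kappagr)\|v_k\|_2$, which yields precisely the claimed right-hand side.

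The computation is essentially routine once the orthogonal decomposition is set up; the only genuinely delicate points are (i) recognizing that the subgradient in the convexity bound must be taken at $x_k+s_k$ so that~\eqref{eq:uT} can be invoked to kill the tangential inner-product term, and (ii) using the sign condition $\sigmabar_u \leq 1$ to drop the remaining $u_k$ contribution. I expect no significant obstacle beyond keeping the bookkeeping of the split consistent.
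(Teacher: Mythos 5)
Your proposal is correct and follows essentially the same route as the paper's proof: orthogonality of $v_k$ and $u_k$ to split the terms, the subgradient inequality at $x_k+s_k$ so that \eqref{eq:uT} cancels the tangential contribution, the sign condition $\sigmabar_u \leq 1$, and Cauchy--Schwarz with \eqref{eq:consequence-of-basic-assumption} on the normal-step terms. The only cosmetic difference is the order in which you invoke \eqref{eq:uT} and the bound $\sigmabar_u \leq 1$.
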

\begin{proof}
    With $g_{r,k}$ defined as in~\eqref{kkt:u}, it follows from convexity of $r$ that $r_k \geq  r(x_k+s_k) + g_{r,k}^T(-s_k)$.  Combining this inequality with $s_k = v_k + u_k$, $v_k^T u_k = 0$, $\sigmabar_u\in(\thalf,1]$, \eqref{eq:uT}, the Cauchy-Schwartz inequality, and~\eqref{eq:consequence-of-basic-assumption} it follows that
\begin{align*}
&g_k^T s_k + \tfrac{\sigmabar_u\|s_k\|_2^2}{\alpha_k} + r(x_k+s_k) - r_k \\
&\leq (g_k + g_{r,k})^T s_k + \tfrac{\sigmabar_u\|s_k\|_2^2}{\alpha_k} \\
&= (g_k + g_{r,k})^T v_k + \tfrac{\sigmabar_u\|v_k\|_2^2}{\alpha_k} +  (g_k + g_{r,k})^T u_k + \tfrac{\sigmabar_u\|u_k\|_2^2}{\alpha_k} \\
&\leq (g_k + g_{r,k})^T v_k + \tfrac{\sigmabar_u\|v_k\|_2^2}{\alpha_k} +  (g_k + g_{r,k})^T u_k + \tfrac{\|u_k\|_2^2}{\alpha_k} \\
&\leq \|g_k + g_{r,k}\|_2 \|v_k\|_2 + \tfrac{\sigmabar_u\|v_k\|_2^2}{\alpha_k} \\
&\leq (\kappag + \kappagr)\|v_k\|_2 + \tfrac{\sigmabar_u\|v_k\|_2^2}{\alpha_k},
\end{align*}
which completes the proof.
\end{proof}

We next show that the merit sequence is positive and monotonically decreasing.

 \begin{lemma}\label{lem:tau-mono}
 For all $k \geq 1$, it holds that $0 < \tau_k \leq \tau_{k-1}$.
 \end{lemma}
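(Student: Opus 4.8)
The plan is to prove the claim $0 < \tau_k \le \tau_{k-1}$ for all $k \ge 1$ directly from the update rule~\eqref{update:tau}, proceeding by a straightforward case analysis on the two branches of that update. The monotonicity half, $\tau_k \le \tau_{k-1}$, is essentially immediate: in the first branch we have $\tau_k = \tau_{k-1}$, and in the second branch we have $\tau_k = \min\{(1-\epsilon_\tau)\tau_{k-1}, \tauktrial\}$, so since $\epsilon_\tau \in (0,1)$ gives $(1-\epsilon_\tau)\tau_{k-1} < \tau_{k-1}$, we obtain $\tau_k \le (1-\epsilon_\tau)\tau_{k-1} \le \tau_{k-1}$ in either subcase. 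This handles the upper bound with no difficulty.

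The more delicate half is strict positivity, $\tau_k > 0$, and here I would argue by induction on $k$. The base case rests on the input $\tau_{-1} \in \R{}_{>0}$ from Algorithm~\ref{alg:main}, which seeds the recursion positively. For the inductive step, assume $\tau_{k-1} > 0$. In the first branch of~\eqref{update:tau} we have $\tau_k = \tau_{k-1} > 0$ immediately. In the second branch, $\tau_k = \min\{(1-\epsilon_\tau)\tau_{k-1}, \tauktrial\}$, and since $(1-\epsilon_\tau)\tau_{k-1} > 0$ by the inductive hypothesis and $\epsilon_\tau \in (0,1)$, positivity of $\tau_k$ reduces to showing that $\tauktrial > 0$ whenever $\tauktrial$ is finite.

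The crux, then, is verifying $\tauktrial > 0$. By definition $\tauktrial$ is finite only when the denominator $g_k^T s_k + \tfrac{\sigmabar_u\|s_k\|_2^2}{\alpha_k} + r(x_k+s_k) - r_k$ is strictly positive (the "otherwise" branch), so the denominator contributes a positive quantity. The numerator is $(1-\sigma_c)(\|c_k\|_2 - \|c_k + J_k v_k\|_2)$; here $1-\sigma_c > 0$ since $\sigma_c \in (0,1)$, and the factor $\|c_k\|_2 - \|c_k + J_k v_k\|_2$ is nonnegative because $v_k$ reduces the model of linearized infeasibility (indeed Lemma~\ref{lem:constraint-violation-change} gives a nonnegative lower bound for this quantity). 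I expect this is the main obstacle worth attention: one must confirm the numerator is not merely nonnegative but that, combined with the finiteness condition, $\tauktrial$ is genuinely positive. Since we are in the non-finite-termination regime, \eqref{eq:s-nonzero} guarantees $s_k \neq 0$, and I would use Lemma~\ref{lem:constraint-violation-change} together with Lemma~\ref{lem:Deltaqk-positive} to ensure the relevant quantities are strictly controlled; in particular $\Delta q_k(s_k,\tau_k) > 0$ already implies the combined numerator/denominator structure yields $\tauktrial > 0$ on the finite branch. Assembling these observations closes the induction and yields $0 < \tau_k \le \tau_{k-1}$ for all $k \ge 1$.
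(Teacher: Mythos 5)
Your treatment of monotonicity and the overall reduction to showing $\tauktrial > 0$ on the finite branch match the paper, but the way you close that last step has a genuine gap. You correctly flag the crux — the numerator $(1-\sigma_c)(\|c_k\|_2 - \|c_k+J_kv_k\|_2)$ must be \emph{strictly} positive, since a zero numerator over a positive denominator would give $\tauktrial = 0$ and hence $\tau_k = 0$ — but your proposed resolution does not actually establish this. Appealing to $s_k \neq 0$ from~\eqref{eq:s-nonzero} is of no help, because $s_k \neq 0$ does not imply $v_k \neq 0$, and if $v_k = 0$ the numerator vanishes. Worse, invoking Lemma~\ref{lem:Deltaqk-positive} here is circular: the strict inequality $\Delta q_k(s_k,\tau_k) > 0$ in that lemma rests on the lower bound $\tfrac{\sigma_u\tau_k}{\alpha_k}\|s_k\|_2^2 + \sigma_c(\|c_k\|_2 - \|c_k+J_kv_k\|_2)$, whose strict positivity when $v_k = 0$ requires precisely the fact $\tau_k > 0$ that you are trying to prove.

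The missing ingredient is Lemma~\ref{lem:den-bound}. On the finite branch the denominator $g_k^Ts_k + \tfrac{\sigmabar_u\|s_k\|_2^2}{\alpha_k} + r(x_k+s_k) - r_k$ is strictly positive, and Lemma~\ref{lem:den-bound} bounds this denominator above by $(\kappag+\kappagr)\|v_k\|_2 + \tfrac{\sigmabar_u\|v_k\|_2^2}{\alpha_k}$; hence the finite branch can only occur when $v_k \neq 0$. Lemma~\ref{lem:v-is-zero} then gives $J_k^Tc_k \neq 0$, and Lemma~\ref{lem:constraint-violation-change} together with $\alpha_k > 0$ yields $\|c_k\|_2 - \|c_k+J_kv_k\|_2 \geq \tfrac{1}{2\kappac}\|J_k^Tc_k\|_2^2\min\{\tfrac{1}{1+\kappaJ^2},\kappav\alpha_k\} > 0$, making the numerator strictly positive and completing the argument. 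This is exactly the chain the paper uses (phrased contrapositively: $v_k = 0$ forces $\tauktrial = \infty$); without it your induction does not close.
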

 \begin{proof}
 It is clear from $\tau_0 > 0$ and the update~\eqref{update:tau} that $\{\tau_k\}$ is monotonically decreasing, and therefore all that remains is to prove that $\tau_k > 0$ for all $k\in\N{}$.   It follows from Lemma~\ref{lem:den-bound} and the definition of $\tauktrial$ that $\tauktrial = \infty$ if $v_k = 0$, and so for such $k$ we have $\tau_k \gets \tau_{k-1}$.  Therefore, for the remainder we only need to consider 
$k\in\N{}$ such that $v_k \neq 0$.  For such $k\in\N{}$, we know from Lemma~\ref{lem:v-is-zero} that $J_k^T c_k \neq 0$.  The result $\tau_k > 0$ follows from this observation, \eqref{update:tau}, $\alpha_k > 0$, and Lemma~\ref{lem:constraint-violation-change}.
 \end{proof}

The first part of the next lemma shows that the merit parameter never needs to be decreased for iterations $k\in\N{}$ such that $J_k^T c_k = 0$.  On the other hand, for all $k\in\N{}$ such that $J_k^T c_k \neq 0$, the second part of the result gives a lower bound on how small the previous merit parameter could have been.

\begin{lemma}\label{lem:lower-bound-tau-1}
    The following merit parameter update holds for each $k\in\N{}\setminus\{0\}$.
    \begin{itemize}
        \item[(i)] If $J_k^T c_k = 0$, then $\tauktrial = \infty$ and $\tau_k \gets \tau_{k-1}$.
        \item[(ii)] There exists a constant $\epsilon_{\tau} > 0$ such that, for all $k\in\N{}$ satisfying $J_k^T c_k \neq 0$ and $\tau_k < \tau_{k-1}$, it holds that
        $\tau_{k-1} \geq \epsilon_{\tau}\|J_k^T c_k\|_2$.
    \end{itemize}
\end{lemma}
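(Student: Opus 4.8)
The plan is to handle the two parts separately, with part~(i) following almost immediately from earlier lemmas and part~(ii) requiring the bulk of the work.

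For part~(i), suppose $J_k^T c_k = 0$. Then Lemma~\ref{lem:v-is-zero} gives $v_k = 0$, so the right-hand side of the bound in Lemma~\ref{lem:den-bound} vanishes and the quantity $g_k^T s_k + \tfrac{\sigmabar_u\|s_k\|_2^2}{\alpha_k} + r(x_k+s_k) - r_k$ is nonpositive. By the definition of $\tauktrial$ this places us in its first case, so $\tauktrial = \infty$, and then the update~\eqref{update:tau} forces $\tau_k \gets \tau_{k-1}$ because $\tau_{k-1} \le \infty = \tauktrial$. (This is the same reasoning already used inside the proof of Lemma~\ref{lem:tau-mono}.)

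For part~(ii), I first note that $\tau_k < \tau_{k-1}$ can occur only through the second branch of~\eqref{update:tau}, which requires $\tau_{k-1} > \tauktrial$; since $\tau_{k-1}$ is finite and positive, $\tauktrial$ is finite, so the relevant denominator is strictly positive and $\tauktrial$ equals the displayed ratio. Moreover $J_k^T c_k \neq 0$ makes the numerator positive via Lemma~\ref{lem:constraint-violation-change}. It therefore suffices to exhibit a $k$-independent constant $\epsilon_\tau > 0$ with $\tauktrial \ge \epsilon_\tau \|J_k^T c_k\|_2$, since then $\tau_{k-1} > \tauktrial \ge \epsilon_\tau \|J_k^T c_k\|_2$ gives the claim. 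To bound the ratio from below, I lower bound its numerator using Lemma~\ref{lem:constraint-violation-change} and upper bound its denominator using Lemma~\ref{lem:den-bound}, \eqref{conds:vk-2}, and $\|J_k^T c_k\|_2 \le \kappaJ\kappac$ from~\eqref{eq:consequence-of-basic-assumption}; this yields a denominator bound of the form $\kappav\alpha_k\|J_k^T c_k\|_2\big((\kappag+\kappagr)+\sigmabar_u\kappav\kappaJ\kappac\big)$. After cancelling one factor of $\|J_k^T c_k\|_2$, the ratio retains the factor $\tfrac{1}{\alpha_k}\min\{\tfrac{1}{1+\kappaJ^2},\kappav\alpha_k\}$.

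The main obstacle is that this surviving factor still involves $\alpha_k$, and the proximal term $\tfrac{1}{\alpha_k}$ threatens to blow up as $\alpha_k \searrow 0$; the crux is to bound the factor below by a positive constant independent of $k$. The key observation is that $\{\alpha_k\}$ is nonincreasing (Algorithm~\ref{alg:main} either keeps $\alpha_k$ fixed or scales it by $\xi\in(0,1)$), so $\alpha_k \le \alpha_0$; then a two-case split on whether $\kappav\alpha_k \le \tfrac{1}{1+\kappaJ^2}$ shows $\tfrac{1}{\alpha_k}\min\{\tfrac{1}{1+\kappaJ^2},\kappav\alpha_k\} \ge \min\{\kappav,\tfrac{1}{(1+\kappaJ^2)\alpha_0}\} > 0$. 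Collecting constants yields the admissible value
\[
\epsilon_\tau = \frac{(1-\sigma_c)\min\{\kappav,\,1/((1+\kappaJ^2)\alpha_0)\}}{2\kappac\kappav\big((\kappag+\kappagr)+\sigmabar_u\kappav\kappaJ\kappac\big)},
\]
which completes the proof.
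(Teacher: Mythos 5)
Your proof is correct and follows essentially the same route as the paper's: part (i) via Lemma~\ref{lem:v-is-zero} and Lemma~\ref{lem:den-bound}, and part (ii) by noting $\tau_{k-1} > \tauktrial$, lower-bounding the numerator with Lemma~\ref{lem:constraint-violation-change}, upper-bounding the denominator with Lemma~\ref{lem:den-bound}, \eqref{conds:vk-2}, and $\|J_k^Tc_k\|_2\le\kappaJ\kappac$, and then resolving the remaining $\tfrac{1}{\alpha_k}\min\{\cdot\}$ factor by the same two-case split using $\alpha_k\le\alpha_0$. The only differences are cosmetic (your denominator is factored slightly differently and the final constant is written as a single $\min$ rather than a two-case display).
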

\begin{proof}
For part (i), it follows from $J_k^T c_k = 0$ and  Lemma~\ref{lem:v-is-zero} that $v_k = 0$.  This fact, Lemma~\ref{lem:den-bound}, and the definition of $\tauktrial$ show that $\tauktrial = \infty$, so that $\tau_k \gets \tau_{k-1}$.

For part (ii), it follows from~\eqref{update:tau},  Lemma~\ref{lem:constraint-violation-change}, Lemma~\ref{lem:den-bound}, the trust-region constraint in problem~\eqref{prob:normal.step}, 
and~\eqref{eq:consequence-of-basic-assumption} that if $\tau_k < \tau_{k-1}$, then 
\begin{equation}\label{eq:tprev-bound-initial}
\begin{aligned}
\tau_{k-1}
&> \frac{(1-\sigma_c)(\|c_k\|_2 - \|c_k+J_k v_k\|_2)}{g_k^T s_k + \tfrac{\sigmabar_u\|s_k\|_2^2}{\alpha_k} + r(x_k+s_k) - r_k} \\
&\geq 
\frac{
\tfrac{(1-\sigma_c)}{2\kappa_c}\|J_k^T c_k\|_2^2 \min\{\tfrac{1}{1+\kappaJ^2}, \kappav\alpha_k\}
}
{
(\kappag + \kappagr)\|v_k\|_2 + \tfrac{\sigmabar_u\|v_k\|_2^2}{\alpha_k}
} \\
&\geq
\frac{
(1-\sigma_c)\|J_k^T c_k\|_2^2 \min\{\tfrac{1}{1+\kappaJ^2}, \kappav \alpha_k\}
}
{
2\kappac(\kappag + \kappagr)\kappav\alpha_k\|J_k^T c_k\|_2 + \tfrac{\sigmabar_u\kappav^2\alpha_k^2\|J_k^T c_k\|_2^2}{\alpha_k}
} \\
&= \frac{
(1-\sigma_c)\|J_k^T c_k\|_2 \min\{\tfrac{1}{1+\kappaJ^2}, \kappav \alpha_k\}
}
{
2\kappac(\kappag + \kappagr)\kappav\alpha_k + \sigmabar_u\kappav^2\alpha_k\|J_k^T c_k\|_2
} \\
&\geq 
 \frac{
(1-\sigma_c)\|J_k^T c_k\|_2 \min\{\tfrac{1}{1+\kappaJ^2}, \kappav \alpha_k\}
}
{
2\kappac(\kappag + \kappagr)\kappav\alpha_k + \sigmabar_u\kappav^2\alpha_k\kappaJ\kappac
}. 
\end{aligned}
\end{equation}
It follows from~\eqref{eq:tprev-bound-initial} and the fact that $\{\alpha_k\}$ is monotonically nonincreasing that
\begin{equation*}
\tau_{k-1}
\geq
\begin{cases}
 \frac{
(1-\sigma_c)\|J_k^T c_k\|_2
}
{
2\kappac(\kappag + \kappagr) + \sigmabar_u\kappav\kappaJ\kappac
}
&
\text{if $\kappav\alpha_k \leq 1/(1+\kappaJ^2)$,} \\
\frac{
(1-\sigma_c)\|J_k^T c_k\|_2
}
{
2\kappac(1+\kappaJ^2)(\kappag + \kappagr)\kappav\alpha_0 + \sigmabar_u\kappav^2\alpha_0\kappaJ\kappac
} 
&
\text{otherwise,}
\end{cases}
\end{equation*}
which completes the proof.
\end{proof}





We now prove our first key convergence result.  In particular, we prove that there must exist a subsequence of the set of successful iterations over which $\{J_k^T c_k\}$ converges to zero.  This conclusion is relevant to our setting because, under a suitable constraint qualification,  if $\xbar$ is a local minimizer of $\thalf\|c(x)\|_2^2$, then $J(\xbar)^T c(\xbar) = 0$.



\begin{theorem}\label{thm:JTc-to-zero}
Let Assumption~\ref{ass:basic} hold.  Then, there exists a subsequence of the iterations $\Kcal\subseteq\N{}$ such that $\lim_{k\in\Kcal} J_k^T c_k = 0$.
\end{theorem}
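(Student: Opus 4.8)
The plan is to argue by contradiction, supposing that $\|J_k^T c_k\|_2$ is bounded away from zero over all successful iterations, and then deriving a contradiction with the fact that $f$ is bounded below and the constraint functions are bounded. The natural quantity to track is the merit function $\Phi_{\tau_k}$, which is monotonically reduced at every successful iteration by the sufficient-decrease test in Line~\ref{line:check-suff-decrease}. First I would establish that the sequence $\{\alpha_k\}$ remains bounded below by a positive constant; this is where Lemma~\ref{cond:for-k-in-S} enters. That lemma gives a sufficient condition for $k\in\Scal$ in terms of $\Delta q_k(s_k,\tau_k)$ and $\|s_k\|_2^2$. Since $\Delta q_k(s_k,\tau_k) \geq \tfrac{\sigma_u\tau_k}{\alpha_k}\|s_k\|_2^2$ by Lemma~\ref{lem:Deltaqk-positive}, when $\alpha_k$ becomes small enough the left-hand side of the condition in Lemma~\ref{cond:for-k-in-S} dominates the right-hand side (the $-\tfrac{\tau_k}{\alpha_k}$ term grows negatively, making the right side eventually nonpositive), forcing a successful iteration and preventing further reduction of $\alpha_k$. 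Combined with the update rule $\alpha_{k+1}\gets\xi\alpha_k$ on unsuccessful steps, this yields a uniform lower bound $\alpha_k \geq \alphamin > 0$.

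Next I would exploit the telescoping structure of the merit function decrease. On each successful iteration, $\Phi_{\tau_k}(x_{k+1}) \leq \Phi_{\tau_k}(x_k) - \eta\Delta q_k(s_k,\tau_k)$, and on unsuccessful iterations $x_{k+1}=x_k$. Because $\{\tau_k\}$ is monotonically decreasing and positive (Lemma~\ref{lem:tau-mono}), and because $f$ is bounded below by $\finf$ while $r\geq 0$ and $\|c\|_2$ is bounded, the merit function $\Phi_{\tau_k}(x_k)$ is bounded below uniformly in $k$. Care is needed here because $\tau_k$ changes: one must either argue that $\tau_k$ is eventually constant (which is not yet available at this point in the paper) or handle the merit-parameter decrease directly. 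The cleaner route is to bound the sum $\sum_{k\in\Scal} \Delta q_k(s_k,\tau_k)$ using the telescoped decrease together with the bounded-below property; since each $\Delta q_k(s_k,\tau_k)>0$, summability forces $\Delta q_k(s_k,\tau_k)\to 0$ along $\Scal$.

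From Lemma~\ref{lem:Deltaqk-positive} and Lemma~\ref{lem:constraint-violation-change}, we have
$$
\Delta q_k(s_k,\tau_k) \geq \sigma_c\big(\|c_k\|_2 - \|c_k+J_k v_k\|_2\big) \geq \tfrac{\sigma_c}{2\kappac}\|J_k^T c_k\|_2^2 \min\left\{\tfrac{1}{1+\kappaJ^2}, \kappav\alpha_k\right\}.
$$
Using the lower bound $\alpha_k\geq\alphamin$, the minimum is bounded below by a positive constant, so $\Delta q_k(s_k,\tau_k) \geq C\|J_k^T c_k\|_2^2$ for some $C>0$ and all $k\in\Scal$. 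Since $\Delta q_k(s_k,\tau_k)\to 0$ along $\Scal$, it follows that $\|J_k^T c_k\|_2\to 0$ along a subsequence $\Kcal\subseteq\Scal$, which is the claim.

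The main obstacle I anticipate is handling the varying merit parameter $\tau_k$ cleanly in the telescoping argument, since this theorem is proved before $\tau_k$ is shown to stabilize. The resolution is that the merit function decrease combined with the positivity and monotonicity of $\tau_k$, together with the uniform lower bound on $\Phi_{\tau_k}(x_k)$ implied by Assumption~\ref{ass:basic}, still forces the summability of the positive decrease terms; one must be careful to track how a decrease in $\tau_k$ affects $\Phi_{\tau_k}(x_k)$ relative to $\Phi_{\tau_{k-1}}(x_k)$, but since $f+r$ and $\|c\|_2$ are bounded, this correction is controlled.
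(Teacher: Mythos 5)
Your overall architecture (contradiction hypothesis, lower bound on $\alpha_k$, telescoped merit decrease, Cauchy-type bound from Lemma~\ref{lem:constraint-violation-change}) matches the paper's, but there is a genuine gap at your very first step: the uniform lower bound $\alpha_k \geq \alphamin$ cannot be extracted from Lemma~\ref{cond:for-k-in-S} without first bounding $\tau_k$ away from zero. The sufficient condition for success in that lemma requires $-\tfrac{\tau_k}{\alpha_k} + \tau_k\lipg + \lipJ \leq 0$, i.e., $\alpha_k \leq \tau_k/(\tau_k\lipg + \lipJ)$, and this threshold shrinks to zero if $\tau_k \to 0$. So "when $\alpha_k$ becomes small enough the right side is nonpositive" only produces a $k$-dependent threshold, not a uniform $\alphamin$. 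The missing idea --- and the first move in the paper's proof --- is to use the contradiction hypothesis itself to control $\tau_k$: if $\|J_k^Tc_k\|_2 \geq \epsilon$ for all large $k$, then Lemma~\ref{lem:lower-bound-tau-1}(ii) gives $\tau_{k-1} \geq \epsilon_\tau\|J_k^Tc_k\|_2 \geq \epsilon_\tau\epsilon$ whenever $\tau_k < \tau_{k-1}$, and since each decrease of $\tau$ is by at least the factor $(1-\epsilon_\tau)$ (see~\eqref{update:tau}), the sequence $\{\tau_k\}$ is bounded below and hence \emph{eventually constant}. Only then does your $\alpha_k$ argument go through, and only then can you conclude that all sufficiently large iterations are successful.

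Once $\tau_k$ is eventually constant, the "main obstacle" you anticipate (the varying merit parameter in the telescoping sum) evaporates, which is why the paper never needs the correction you sketch. As stated, that correction is also not justified: Assumption~\ref{ass:basic} bounds $f$ below and $\|c\|_2$ above, but does not bound $f+r$ above, so "since $f+r$ and $\|c\|_2$ are bounded, this correction is controlled" is not available. The general-purpose device for a drifting $\tau_k$ is the shifted merit function $\phibar_\tau(x) = \tau(f(x)-\finf+r(x)) + \|c(x)\|_2$, which is nonnegative and monotone in $\tau$ (Lemma~\ref{lem:taubar-props}, used later for Theorem~\ref{thm:complexity}); here it is unnecessary because of the eventual constancy of $\tau_k$. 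With the $\tau_k$ step inserted at the front, the remainder of your argument --- summability of $\Delta q_k(s_k,\tau_k)$ over the (necessarily infinite) set of successful iterations, the bound $\Delta q_k(s_k,\tau_k) \geq \tfrac{\sigma_c}{2\kappac}\|J_k^Tc_k\|_2^2\min\bigl\{\tfrac{1}{1+\kappaJ^2},\kappav\alphamin\bigr\}$, and the resulting contradiction --- is exactly the paper's.
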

\begin{proof}
For a proof by contradiction, suppose that there exists a $\kbar_1\in\N{}$ and $\epsilon > 0$ such that $\|J_k^T c_k\|_2 \geq \epsilon$ for all $k\geq \kbar_1$.  Then, it follows from Lemma~\ref{lem:lower-bound-tau-1} and $\tau_0 \in\R{}_{>0}$ that there exits $\taubar_1 > 0$ such that, for all $k\in\N{}$, it holds that $\tau_k \geq \taubar_1$. Moreover, since $\{\tau_k\}$ is monotonically nonincreasing and when $\tau_k < \tau_{k-1}$ the reduction is by at least a constant factor (see~\eqref{update:tau}), we know that there exists $\kbar_2 \geq \kbar_1$ and $\taubar_2 \geq \taubar_1$ such that $\tau_k = \taubar_2$ for all $k \geq \kbar_2$.  Combining this with $\Delta q_k(s_k,\tau_k) > 0$ (see Lemma~\ref{lem:Deltaqk-positive})  
and Lemma~\ref{cond:for-k-in-S} it follows that for all $k\geq \kbar_2$ such that $\alpha_k \leq \taubar_2/(\taubar_2 \lipg + \lipJ)$ it must also hold that $k\in\Scal$.  Since $\alpha_{k+1} < \alpha_k$ only when $k\notin\Scal$, it follows that there must exist $\alphabar\in\R{}_{>0}$ and $\kbar_3 \geq \kbar_2$ such that $\alpha_k = \alphabar$ for all $k \geq \kbar_3$.  To summarize, we have proved that for all $k\geq \kbar_3$ it holds that $\alpha_k = \alphabar$, $\tau_k = \taubar_2$, and $k\in\Scal$.  It now follows from line~\ref{line:check-suff-decrease} of Algorithm~\ref{alg:main} that $\Phi_{\taubar_2}(x_{k+1}) \leq \Phi_{\taubar_2}(x_k)  - \eta\Delta q_k(s_k,\taubar_2)$ for all $k\geq \kbar_3$.  Summing over all $k \geq \kbar_3$ and using~\eqref{eq:consequence-of-basic-assumption}
and Lemma~\ref{lem:Deltaqk-positive} we have
\begin{align*}
\Phi_{\taubar_2}(x_{\kbar_3}) - \taubar_2 \finf
&\geq \sum_{k\geq \kbar_3} \big( \Phi_{\taubar_2}(x_k) - \Phi_{\taubar_2}(x_{k+1}) \big) \\
&\geq \eta \sum_{k\geq \kbar_3} \Delta q_k(s_k,\taubar_2) \\
&\geq \eta\sum_{k\geq \kbar_3} \tfrac{\sigma_u\tau_k}{\alpha_k}\|s_k\|_2^2 + \sigma_c\big( \|c_k\|_2 - \|c_k + J_k v_k\|_2\big) \\
&> \eta\sigma_c\sum_{k\geq \kbar_3} \big( \|c_k\|_2 - \|c_k + J_k v_k\|_2\big).
\end{align*}
Since the summation of nonnegative terms is finite, we know that
$$
\lim_{k\to\infty} \big( \|c_k\|_2 - \|c_k + J_k v_k\|_2\big) = 0.
$$
This fact, Lemma~\ref{lem:constraint-violation-change}, and $\alpha_k = \alphabar$ for all $k\geq \kbar_3$ 
imply that $\lim_{k\to\infty} J_k^T c_k = 0$, which contradicts our earlier assumption that $\|J_k^T c_k\|_2 \geq \epsilon$ for all $k\geq \kbar_1$.
\end{proof}


The remainder of the analysis considers two settings that are characterized by whether a certain constraint qualification holds or not.

\subsubsection{Strong LICQ}\label{sec:strong-licq}


In this section we make the following assumption, which is closely related to the linear independence constraint qualification (LICQ).


\begin{assumption}\label{ass:J-singular-value}
The smallest singular values of $\{J_k\}$ are uniformly bounded away from zero, i.e., there exists $\sigmamin > 0$ such that, for all $k\in\N{}$, $\sigmamin(J_k) \geq \sigmamin$.
\end{assumption}

We can now prove a nontrival bound on the improvement in linearized infeasibility achieved by the trial step $s_k$ relative to the actual infeasibility. This result is critical when we prove a uniform lower bound on the sequence of merit parameters.



\begin{lemma}\label{lem:wp}
    If $J_k^T c_k \neq 0$, then $s_k$ 
    satisfies
    $\|c_k+J_k s_k\|_2
    \leq \rho_k \|c_k\|_2$
    where
    \begin{equation*}
    \rho_k := 
    \sqrt{
    \max
    \Big\{
    1 - \kappav\alpha_k\sigmamin^2,
    1 - \sigmamin^2/\kappaJ^2
    \Big\}
    }
    \in [0,1).
    \end{equation*}
\end{lemma}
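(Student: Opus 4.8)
The plan is to reduce the claim to a statement about the Cauchy point $v_k^c$ and then analyze that point exactly. First I would observe that the constraint $J_k u_k = 0$ in~\eqref{prob:prox.sqp.relaxed} gives $J_k s_k = J_k v_k$, so $\|c_k + J_k s_k\|_2 = \|c_k + J_k v_k\|_2$, and then the Cauchy condition~\eqref{conds:vk-3} yields $\|c_k + J_k v_k\|_2 \leq \|c_k + J_k v_k^c\|_2$. Hence it suffices to prove $\|c_k + J_k v_k^c\|_2 \leq \rho_k \|c_k\|_2$. I would also record at the outset the two singular-value estimates that drive everything: since $J_k J_k^T \succeq \sigmamin(J_k)^2 I \succeq \sigmamin^2 I$ under Assumption~\ref{ass:J-singular-value}, we have $\|J_k^T c_k\|_2^2 = c_k^T J_k J_k^T c_k \geq \sigmamin^2 \|c_k\|_2^2$; and since $\|J_k\|_2 \leq \kappaJ$ by~\eqref{eq:consequence-of-basic-assumption}, we have $\|J_k J_k^T c_k\|_2 = \|J_k (J_k^T c_k)\|_2 \leq \kappaJ \|J_k^T c_k\|_2$ (which is positive whenever $J_k^T c_k \neq 0$, so no denominator vanishes below).

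The key point is that I would \emph{not} simply invoke the off-the-shelf Cauchy decrease~\eqref{eq:cauchy-decrease}: that bound carries the factor $\tfrac{1}{1+\|J_k^T J_k\|_2}$, which only produces a $(1+\kappaJ^2)$ in the denominator, whereas the lemma asks for the sharper $\kappaJ^2$. Instead I would compute the Cauchy point explicitly. Because $\beta \mapsto m_k(-\beta J_k^T c_k)$ is a convex quadratic with positive linear coefficient, its unconstrained minimizer is $\beta_k^* = \|J_k^T c_k\|_2^2 / \|J_k J_k^T c_k\|_2^2 > 0$, and by~\eqref{def:vkc} the constrained minimizer is $\beta_k^c = \min\{\beta_k^*, \kappav \alpha_k\}$. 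I would then split into two cases according to which term attains this minimum.

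In the inactive case $\beta_k^c = \beta_k^*$, substituting $\beta_k^*$ into $m_k(0)-m_k(v_k^c) = \thalf(\|c_k\|_2^2 - \|c_k + J_k v_k^c\|_2^2)$ gives $\|c_k\|_2^2 - \|c_k + J_k v_k^c\|_2^2 = \|J_k^T c_k\|_2^4 / \|J_k J_k^T c_k\|_2^2 \geq \|J_k^T c_k\|_2^2/\kappaJ^2 \geq (\sigmamin^2/\kappaJ^2)\|c_k\|_2^2$, which yields the second term inside the max defining $\rho_k^2$. In the active case $\beta_k^c = \kappav\alpha_k$, the inequality $\beta_k^* \geq \kappav\alpha_k$ rearranges to $\|J_k^T c_k\|_2^2 \geq \kappav\alpha_k \|J_k J_k^T c_k\|_2^2$, so the decrease $\kappav\alpha_k\|J_k^T c_k\|_2^2 - \thalf(\kappav\alpha_k)^2\|J_k J_k^T c_k\|_2^2$ is at least $\thalf\kappav\alpha_k\|J_k^T c_k\|_2^2 \geq \thalf\kappav\alpha_k\sigmamin^2\|c_k\|_2^2$, which gives $\|c_k + J_k v_k^c\|_2^2 \leq (1-\kappav\alpha_k\sigmamin^2)\|c_k\|_2^2$, the first term. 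Taking the larger of the two bounds produces $\|c_k + J_k v_k^c\|_2^2 \leq \rho_k^2\|c_k\|_2^2$, and I would close by confirming $\rho_k \in [0,1)$: both arguments of the max are strictly below $1$ (as $\kappav\alpha_k\sigmamin^2 > 0$ and $\sigmamin^2/\kappaJ^2 > 0$), and the max is nonnegative because $\sigmamin \leq \sigmamax(J_k) = \|J_k\|_2 \leq \kappaJ$ forces $1 - \sigmamin^2/\kappaJ^2 \geq 0$. The main obstacle is precisely the need for this exact Cauchy-point computation (and the attendant case split on whether the trust-region bound is active) in order to sharpen $1+\kappaJ^2$ down to $\kappaJ^2$; the singular-value inequalities above are routine once Assumption~\ref{ass:J-singular-value} is in force.
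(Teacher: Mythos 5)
Your proposal is correct and follows essentially the same route as the paper's proof: reduce to the Cauchy point via $J_k s_k = J_k v_k$ and \eqref{conds:vk-3}, use the explicit formula $\beta_k^c = \min\{\|J_k^Tc_k\|_2^2/\|J_kJ_k^Tc_k\|_2^2,\kappav\alpha_k\}$, split on which term attains the minimum, and apply the bounds $\|J_k^Tc_k\|_2^2\geq\sigmamin^2\|c_k\|_2^2$ and $\|J_kJ_k^Tc_k\|_2\leq\kappaJ\|J_k^Tc_k\|_2$ in the respective cases. Your observation that the generic Cauchy decrease \eqref{eq:cauchy-decrease} is too weak (yielding $1+\kappaJ^2$ rather than $\kappaJ^2$) correctly identifies why the explicit computation is needed, exactly as in the paper.
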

\begin{proof}
It follows from~\cite[Section~4.1]{nocedal2006numerical} that the Cauchy step $v_k^c$ in~\eqref{def:vkc} satisfies 
\begin{equation}\label{eq:vkc-in-proof}
 v_k^{c} = -\beta_k^c J_k^Tc_k \text{ with } \beta_k^c=\min\left\{\frac{\|J_k^Tc_k\|_2^2}{\|J_kJ_k^Tc_k\|_2^2},\kappav\alpha_k\right\}.
\end{equation}
We now consider two cases.



\noindent\textbf{Case 1:}  $\|J_k^Tc_k\|_2^2 \leq \kappav\alpha_k \| J_kJ_k^Tc_k\|_2^2$.  In this case, the minimum in~\eqref{eq:vkc-in-proof} is the first term, and $J_k J_k^T c_k \neq 0$ since $J_k^T c_k \neq 0$ .  These facts, the inequality that defines this case, the Cauchy-Schwartz inequality, definition of $m_k(0)$, and Assumption~\ref{ass:J-singular-value} give
\begin{align}\label{eq:bd1}
m_k(v_k^c) 
&= m_k(0) - \tfrac{1}{2}\tfrac{\|J_k^Tc_k\|_2^4}{\|J_kJ_k^Tc_k\|_2^2} \leq \tfrac{1}{2}\|c_k\|_2^2 - \tfrac{1}{2}\tfrac{\|J_k^Tc_k\|_2^2}{\kappaJ^2} \\
&\leq \thalf\|c_k\|_2^2 - \thalf\tfrac{\sigmamin^2(J_k)}{\kappaJ^2} \|c_k\|_2^2 
\leq \thalf\left(1-\tfrac{\sigmamin^2}{\kappaJ^2}\right)\|c_k\|_2^2.
\end{align}

\noindent\textbf{Case 2:}  $\|J_k^Tc_k\|_2^2 > \kappav\alpha_k \| J_kJ_k^Tc_k\|_2^2$.  In this case, the minimum in~\eqref{eq:vkc-in-proof} is the second term. This fact, the previous inequality, definition of $m_k(0)$, and Assumption~\ref{ass:J-singular-value} give
    \begin{align*}
        m_k(v_k^c)
        &= m_k(0) - \kappav\alpha_k \|J_k^Tc_k\|_2^2 + \tfrac{1}{2}\kappav^2\alpha_k^2\|J_kJ_k^Tc_k\|_2^2\nonumber\\
        &\leq  m_k(0) - \kappav\alpha_k\|J_k^Tc_k\|_2^2 
          + \tfrac{1}{2} \kappav\alpha_k \|J_k^Tc_k\|_2^2\nonumber\\
        &=  m_k(0) - \thalf\kappav \alpha_k\|J_k^Tc_k\|_2^2
        \leq \thalf \left(1- \kappav\alpha_k\sigmamin^2\right) \|c_k\|_2^2.
    \end{align*}

By combining the final result for the two cases, we find that
$m_k(v_k^c) \leq \thalf\rho_k^2\|c_k\|_2^2$. Multiplying both sides of this inequality by two, taking the square root, and using~\eqref{conds:vk-3} and the fact that $c_k+J_ks_k = c_k +J_k v_k$ since $J_k u_k = 0$, it follows that
$$
\|c_k+J_k s_k\|_2 
= \|c_k+J_k v_k\|_2
\leq \|c_k + J_k v_k^c\|_2
\leq \rho_k \|c_k\|_2,
$$
which completes the proof.
\end{proof}

We may now prove that $\{\tau_k\}$ is bounded away from zero.

\begin{lemma}\label{lem:tau-bounded-below}
For all $k\in\N{}$, it holds that $\tauktrial \geq \tautrialmin$ with 
\begin{align*}
&\tautrialmin := \\
&\min\left\{
\frac{ (1-\sigma_c) \kappav\sigmamin^2
}{2\kappav\kappaJ
\left(
\kappag + \kappagr + \sigmabar_u\kappac\kappav\kappaJ
\right)},
\frac{ (1-\sigma_c)(\sigmamin/\kappaJ)^2
}{2\kappav\kappaJ
\left(
\kappag + \kappagr + \sigmabar_u\kappac\kappav\kappaJ
\right)\alpha_0}
\right\},
\end{align*}
which when combined with~\eqref{lem:lower-bound-tau-1} gives $\tau_k \geq \taumin := \min\{\tau_0,(1-\epsilon_\tau)\tautrialmin\}$.
%
\end{lemma}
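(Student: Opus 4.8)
The plan is to establish the per-iteration bound $\tauktrial \geq \tautrialmin$ and then propagate it to $\tau_k$ through the update rule~\eqref{update:tau}. First I would dispose of the easy cases. If $J_k^T c_k = 0$, then Lemma~\ref{lem:v-is-zero} gives $v_k = 0$, so the upper bound in Lemma~\ref{lem:den-bound} forces the quantity $g_k^T s_k + \tfrac{\sigmabar_u\|s_k\|_2^2}{\alpha_k} + r(x_k+s_k) - r_k$ to be nonpositive, whence $\tauktrial = \infty \geq \tautrialmin$; the same conclusion holds whenever that quantity is nonpositive, since then $\tauktrial = \infty$. Thus the substance is the remaining case, where $J_k^T c_k \neq 0$ and $\tauktrial$ equals the finite ratio in its definition. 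Here $c_k \neq 0$ (otherwise $J_k^T c_k = 0$), so $\|c_k\|_2 > 0$, which is what will let a common factor cancel later.

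In this main case I would lower-bound the ratio by lower-bounding its numerator and upper-bounding its denominator. For the numerator $(1-\sigma_c)(\|c_k\|_2 - \|c_k + J_k v_k\|_2)$, I would invoke Lemma~\ref{lem:wp} (which requires Assumption~\ref{ass:J-singular-value}) to get $\|c_k + J_k v_k\|_2 \leq \rho_k\|c_k\|_2$, hence $\|c_k\|_2 - \|c_k + J_k v_k\|_2 \geq (1-\rho_k)\|c_k\|_2$. Since $\rho_k \in [0,1)$, the elementary inequality $1-\rho_k \geq \tfrac{1}{2}(1-\rho_k^2)$ upgrades this to $\tfrac{1}{2}(1-\rho_k^2)\|c_k\|_2$, and the explicit form of $\rho_k$ gives $1-\rho_k^2 = \sigmamin^2\min\{\kappav\alpha_k, 1/\kappaJ^2\}$.

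For the denominator I would start from Lemma~\ref{lem:den-bound}, bound $\|v_k\|_2 \leq \kappav\alpha_k\|J_k^T c_k\|_2 \leq \kappav\alpha_k\kappaJ\|c_k\|_2$ using~\eqref{conds:vk-2} and~\eqref{eq:consequence-of-basic-assumption}, then factor out $\kappav\alpha_k\kappaJ\|c_k\|_2$ and use $\|c_k\|_2 \leq \kappac$ to reach the clean upper bound $\kappav\alpha_k\kappaJ\|c_k\|_2(\kappag + \kappagr + \sigmabar_u\kappac\kappav\kappaJ)$. Combining the two bounds, the common factor $\|c_k\|_2$ cancels and I obtain $\tauktrial \geq \frac{(1-\sigma_c)\sigmamin^2\min\{\kappav\alpha_k,\,1/\kappaJ^2\}}{2\kappav\alpha_k\kappaJ(\kappag + \kappagr + \sigmabar_u\kappac\kappav\kappaJ)}$. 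Splitting on which term attains the minimum finishes the estimate: when $\kappav\alpha_k \leq 1/\kappaJ^2$ the factors $\kappav\alpha_k$ cancel and yield the first term of $\tautrialmin$, while otherwise the residual $\alpha_k$ in the denominator is controlled by $\alpha_k \leq \alpha_0$ to yield the second term, so that $\tauktrial \geq \tautrialmin$ in all cases.

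Finally I would propagate this to $\tau_k$ via~\eqref{update:tau} together with Lemma~\ref{lem:lower-bound-tau-1}: the sequence $\{\tau_k\}$ is nonincreasing, and whenever it strictly decreases it is reset to $\min\{(1-\epsilon_\tau)\tau_{k-1},\tauktrial\}$, which on such a step (where $\tau_{k-1} > \tauktrial \geq \tautrialmin$) is at least $(1-\epsilon_\tau)\tautrialmin$; combined with the initial value this gives $\tau_k \geq \taumin = \min\{\tau_0, (1-\epsilon_\tau)\tautrialmin\}$. The main obstacle is the bookkeeping in the middle case: tracking the factor $\min\{\kappav\alpha_k, 1/\kappaJ^2\}$ so the $\alpha_k$ dependence cancels in one branch and is absorbed into $\alpha_0$ in the other, and lining up the powers of $\|c_k\|_2$ so they cancel cleanly. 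The key that enables that cancellation is the bound $\|v_k\|_2 \leq \kappav\alpha_k\kappaJ\|c_k\|_2$, which matches the single power of $\|c_k\|_2$ produced by the numerator estimate.
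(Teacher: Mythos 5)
Your proposal is correct and follows essentially the same route as the paper's proof: dispose of the $J_k^T c_k = 0$ case via $\tauktrial = \infty$, bound the denominator through Lemma~\ref{lem:den-bound} and the trust-region constraint to get $\kappav\kappaJ(\kappag+\kappagr+\sigmabar_u\kappac\kappav\kappaJ)\alpha_k\|c_k\|_2$, bound the numerator via Lemma~\ref{lem:wp} together with $1-\rho_k \geq \tfrac{1}{2}(1-\rho_k^2)$, cancel $\|c_k\|_2$, and split on the minimum with $\alpha_k \leq \alpha_0$. The propagation to $\tau_k$ via~\eqref{update:tau} also matches the paper's (more tersely stated) argument.
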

\begin{proof}
%
We first prove a lower bound on $\tauktrial$.  Since it follows that $\tauktrial = \infty$ for all $k\in\N{}$ satisfying $J_k^T c_k = 0$ (see Lemma~\ref{lem:lower-bound-tau-1}(i)), we may assume without loss of generality that each $k\in\N{}$ satisfies $J_k^T c_k \neq 0$. Next, we see from Lemma~\ref{lem:den-bound}, the trust-region constraint, and~\eqref{eq:consequence-of-basic-assumption} that
\begin{align*}
&g_k^T s_k + \tfrac{\sigmabar_u\|s_k\|_2^2}{\alpha_k} + r(x_k+s_k) - r_k \\
&\leq (\kappag + \kappagr)\|v_k\|_2 + \tfrac{\sigmabar_u\|v_k\|_2^2}{\alpha_k} \\
&= (\kappag + \kappagr)\kappav\alpha_k\|J_k^T c_k\|_2 + \sigmabar_u\kappav^2\alpha_k\|J_k^T c_k\|_2^2 \\
&\leq (\kappag + \kappagr)\kappav\alpha_k \kappaJ\|c_k\|_2 + \sigmabar_u\kappav^2\alpha_k\kappaJ^2\|c_k\|_2^2 \\
&\leq (\kappag + \kappagr)\kappav\alpha_k \kappaJ\|c_k\|_2 + \sigmabar_u\kappac\kappav^2\alpha_k\kappaJ^2\|c_k\|_2 \\
&= 
\kappav\kappaJ
\left(
\kappag + \kappagr + \sigmabar_u\kappac\kappav\kappaJ
\right)\alpha_k\|c_k\|_2
\ \ \text{for all $k\in\N{}$.}
\end{align*}
On the other hand, we may use 
Lemma~\ref{lem:wp} to obtain
\begin{align*}
\|c_k\|_2 - \|c_k+J_k v_k\|_2
&\geq \|c_k\|_2 - \rho_k\|c_k\|_2
= (1-\rho_k)\|c_k\|_2
\ \ \text{for all $k\in\N{}$.}
\end{align*}
Using the above two bounds and the definition of $\tauktrial$, it follows that
\begin{align*}
\tauktrial
&\geq \frac{ (1-\sigma_c)(1-\rho_k)\|c_k\|_2 }{\kappav\kappaJ
\left(
\kappag + \kappagr + \sigmabar_u\kappac\kappav\kappaJ
\right)\alpha_k\|c_k\|_2} \\
&= \frac{ (1-\sigma_c)(1-\rho_k) }{\kappav\kappaJ
\left(
\kappag + \kappagr + \sigmabar_u\kappac\kappav\kappaJ
\right)\alpha_k}
\ \ \text{for all $k\in\N{}$.}
\end{align*}
Next, notice that it follows from the definition of $\rho_k$ that
\begin{align*}
1-\rho_k
= \tfrac{1-\rho_k^2}{1+\rho_k}
&\geq \tfrac{1-\max\{1-\kappav\alpha_k\sigmamin^2,1-\sigmamin^2/\kappaJ^2\}}{2} \\
&= \tfrac{1 - \big( 1- \min\{\kappav\alpha_k\sigmamin^2, \, \sigmamin^2/\kappaJ^2\}\big) }{2} \\
&= \tfrac{\min\{\kappav\alpha_k\sigmamin^2, \, \sigmamin^2/\kappaJ^2\}}{2}
\ \ \text{for all $k\in\N{}$.}
\end{align*}
Combining this result with the previous displayed equation shows that
$$
\tauktrial
\geq
\frac{ (1-\sigma_c)\min\{\kappav\alpha_k\sigmamin^2, \, (\sigmamin/\kappaJ)^2\}
}{2\kappav\kappaJ
\left(
\kappag + \kappagr + \sigmabar_u\kappac\kappav\kappaJ
\right)\alpha_k}
\ \ \text{for all $k\in\N{}$.}
$$
It follows from this inequality and the fact that $\alpha_k \leq \alpha_0$ for all $k\in\N{}$ that
$$
\tauktrial
\geq
\begin{cases}
\frac{ (1-\sigma_c) \kappav\sigmamin^2
}{2\kappav\kappaJ
\left(
\kappag + \kappagr + \sigmabar_u\kappac\kappav\kappaJ
\right)} 
& \text{if $\kappav\alpha_k\sigmamin^2 \leq (\sigmamin/\kappaJ)^2$,} \\
\frac{ (1-\sigma_c)(\sigmamin/\kappaJ)^2
}{2\kappav\kappaJ
\left(
\kappag + \kappagr + \sigmabar_u\kappac\kappav\kappaJ
\right)\alpha_0} 
& \text{otherwise,}
\end{cases}
$$
for all $k\in\N{}$, which proves our first result. 

The second result, namely the positive lower bound on $\{\tau_k\}$, follows from the first result, $\tau_0 \in \R{}_{>0}$, 
and~\eqref{update:tau}, which completes the proof.
%
\end{proof}

The positive lower bound on $\{\tau_k\}$ lets us prove a positive lower bound on $\{\alpha_k\}$.



\begin{lemma}\label{lem:alpha-bounded-below}
If $\alpha_k \leq 
\taumin/(\taumin \lipg + \lipJ)$, then $k\in\Scal$.  Therefore,
\begin{equation}\label{lb-alpha-dependence-on-tau}
\alpha_k 
\geq \alphamin 
:= 
\min\{\alpha_0, \xi\taumin/(\taumin \lipg + \lipJ)  \} > 0 \ \ \text{for all $k\in\N{}$,}
\end{equation}
and a bound on the number of unsuccessful iterations is given by
\begin{equation}\label{bd-unsuccesful}
|\{k: x_k \notin \Scal\}| 
\leq \max\left(0, 
\left\lceil
\frac{\log\Big(\frac{\taumin}{\alpha_0(\taumin \lipg + \lipJ)}\Big)}{\log(\xi)}
\right\rceil
\right).
\end{equation}
\end{lemma}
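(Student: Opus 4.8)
The plan is to establish the three assertions in turn, relying on Lemma~\ref{cond:for-k-in-S} (the sufficient condition for a successful iteration), the positivity of $\Delta q_k(s_k,\tau_k)$ from Lemma~\ref{lem:Deltaqk-positive}, and the uniform lower bound $\tau_k \geq \taumin$ from Lemma~\ref{lem:tau-bounded-below}. For the implication ``$\alpha_k \leq \taumin/(\taumin\lipg+\lipJ) \Rightarrow k\in\Scal$'', I would first observe that the map $h(\tau) := \tau/(\tau\lipg+\lipJ)$ satisfies $h'(\tau) = \lipJ/(\tau\lipg+\lipJ)^2 > 0$ and is therefore strictly increasing on $\R{}_{>0}$. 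Combined with $\tau_k \geq \taumin$, this gives $\taumin/(\taumin\lipg+\lipJ) \leq \tau_k/(\tau_k\lipg+\lipJ)$, so the hypothesis forces $\alpha_k \leq \tau_k/(\tau_k\lipg+\lipJ)$, equivalently $-\tau_k/\alpha_k + \tau_k\lipg + \lipJ \leq 0$. Hence the right-hand side of the inequality in Lemma~\ref{cond:for-k-in-S} is nonpositive, while its left-hand side $(1-\eta)\Delta q_k(s_k,\tau_k)$ is strictly positive by Lemma~\ref{lem:Deltaqk-positive} and $\eta\in(0,1)$; the hypothesis of Lemma~\ref{cond:for-k-in-S} therefore holds and $k\in\Scal$.

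For the lower bound \eqref{lb-alpha-dependence-on-tau}, I would argue by induction on $k$ using the $\alpha$-update in Algorithm~\ref{alg:main}. The base case $\alpha_0 \geq \alphamin$ is immediate. For the inductive step, if $k\in\Scal$ then $\alpha_{k+1} = \alpha_k \geq \alphamin$, while if $k\notin\Scal$ the contrapositive of the first assertion gives $\alpha_k > \taumin/(\taumin\lipg+\lipJ)$, whence $\alpha_{k+1} = \xi\alpha_k > \xi\taumin/(\taumin\lipg+\lipJ) \geq \alphamin$. This closes the induction.

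Finally, for \eqref{bd-unsuccesful} I would count unsuccessful iterations directly. With $T := \taumin/(\taumin\lipg+\lipJ)$, note that $\alpha$ is multiplied by $\xi$ precisely on unsuccessful iterations and is otherwise unchanged, so the last unsuccessful iteration (if any) occurs at an index $k$ preceded by exactly $U-1$ unsuccessful iterations, where $U$ denotes their total number, giving $\alpha_k = \xi^{U-1}\alpha_0$. The contrapositive of the first assertion gives $\alpha_k > T$, i.e. $\xi^{U-1}\alpha_0 > T$; taking logarithms and dividing by $\log\xi < 0$ (which reverses the inequality) yields $U-1 < B := \log\big(\tfrac{\taumin}{\alpha_0(\taumin\lipg+\lipJ)}\big)/\log\xi$, hence $U \leq \lceil B\rceil$. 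The outer $\max(0,\cdot)$ covers the degenerate case $\alpha_0 \leq T$, in which $B \leq 0$ and the first assertion renders every iteration successful, so $U = 0$.

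I expect the only delicate point to be the arithmetic of this last step: tracking the strict inequality through the logarithm, the sign reversal caused by $\log\xi < 0$, and the passage from $U-1 < B$ to the ceiling bound $U \leq \lceil B\rceil$, alongside the boundary case $\alpha_0 \leq T$ that the $\max(0,\cdot)$ is designed to absorb. Everything else reduces to routine applications of the lemmas already established.
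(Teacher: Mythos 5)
Your proposal is correct and follows essentially the same route as the paper's proof: the monotonicity of $\tau \mapsto \tau/(\tau \lipg + \lipJ)$ combined with $\tau_k \geq \taumin$ from Lemma~\ref{lem:tau-bounded-below}, the positivity of $\Delta q_k(s_k,\tau_k)$ from Lemma~\ref{lem:Deltaqk-positive}, and Lemma~\ref{cond:for-k-in-S} to conclude $k\in\Scal$, followed by the update rule for $\{\alpha_k\}$ to obtain \eqref{lb-alpha-dependence-on-tau} and the geometric-decrease count for \eqref{bd-unsuccesful}. Your counting argument for the unsuccessful iterations is simply a more explicit rendering of the paper's one-line statement that the bound is the smallest nonnegative integer $n_u$ with $\xi^{n_u}\alpha_0 \leq \taumin/(\taumin\lipg+\lipJ)$, and your arithmetic there checks out.
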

\begin{proof}
Suppose that $k\in\N{}$ satisfies $\alpha_k \leq \taumin/(\taumin \lipg + \lipJ)$.
Then it follows from the definition of $\alphamin$, Lemma~\ref{lem:tau-bounded-below}, and the fact that $\tau/(\tau \lipg + \lipJ)$ is a monotonically increasing function on the nonnegative real line as a function of $\tau$ that
$$
\alpha_k 
\leq \taumin/ (\taumin \lipg + \lipJ)
\leq  \tau_k / (\tau_k \lipg + \lipJ), 
$$
which after rearrangement shows that $-\tau_k/\alpha_k + \tau_k \lipg + \lipJ \leq 0$.  It follows from this inequality, Lemma~\ref{lem:Deltaqk-positive}, and $\eta\in(0,1)$ that 
$$
(1-\eta)\Delta q_k(s_k,\tau_k) > 0 \geq \thalf(-\tfrac{\tau_k}{\alpha_k} +  \tau_k \lipg + \lipJ)\|s_k\|_2^2,
$$
which together with Lemma~\ref{cond:for-k-in-S} shows that $k\in\Scal$, as claimed.  
We know from the result we just proved and the update strategy for $\{\alpha_k\}$ that the bound in~\eqref{lb-alpha-dependence-on-tau} holds.  

Finally, the first result we proved in this lemma and the updating strategy for $\{\alpha_k\}$ shows that the maximum number of unsuccessful iterations is the smallest nonnegative integer $n_u$ such that $\xi^{n_u}\alpha_0 \leq \taumin/(\taumin \lipg + \lipJ)$, which gives the final result.
%
\end{proof}

Our worst-case complexity result uses the 
KKT-residual measure
\begin{equation}\label{def:chik}
\chi_k := \max\{\|g_k + g_{r,k} - J_k^T y_k\|_2,\|c_k\|_2\},
\end{equation}
where we remind the reader that $g_{r,k}$ is given in~\eqref{kkt:u}. In proving our complexity result, it will be convenient to define the shifted merit function
$$
\phibar_{\tau}(x) := \tau\big(f(x) - \finf + r(x) \big) + \|c(x)\|_2,
$$
where $\finf$ is defined in Assumption~\ref{eq:consequence-of-basic-assumption}. We stress that the (typically) unknown value $\finf$ is never used in the algorithm statement or its implementation, only in our analysis.  The following results pertain to the shifted merit function.
\begin{lemma}\label{lem:taubar-props}
The following properties hold for the shifted merit function $\phibar_\tau$:
\begin{itemize}
\item[(i)] For all $\{x,y\} \subset\R{n}$ and $\tau\in\R{}_{>0}$, it holds that $\phibar_\tau(x) - \phibar_\tau(y) = \phi_\tau(x) - \phi_\tau(y)$.
\item[(ii)] For all $x\in\R{n}$ and $0 < \tau_2 \leq \tau_1$, it holds that $\phibar_{\tau_2}(x) \leq \phibar_{\tau_1}(x)$. 
\item[(iii)] The sequence $\{\phibar_{\tau_k}(x_k)\}$ is monotonically decreasing.
\end{itemize}
\end{lemma}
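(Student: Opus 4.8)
The plan is to dispose of parts (i) and (ii) by elementary manipulations and to reserve the real (modest) work for part (iii), which is where the two monotonicity mechanisms of the algorithm must be reconciled.

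For (i), I would simply note that $\finf$ is a fixed constant, so $\phibar_\tau(x) = \Phi_\tau(x) - \tau\finf$ differs from the merit function $\Phi_\tau$ only by the $x$-independent quantity $\tau\finf$. Hence forming the difference $\phibar_\tau(x) - \phibar_\tau(y)$ cancels this term and leaves exactly $\Phi_\tau(x) - \Phi_\tau(y)$, which is the assertion (here $\phi_\tau \equiv \Phi_\tau$). For (ii), the crucial structural fact is that the shift renders the coefficient of $\tau$ nonnegative: by the lower bound $f(x) \geq \finf$ in~\eqref{eq:consequence-of-basic-assumption} and the nonnegativity of $r$ stipulated in Assumption~\ref{ass:basic}, one has $f(x) - \finf + r(x) \geq 0$ for every $x\in\Xcal$ (in particular at all iterates). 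Thus for fixed $x$ the map $\tau \mapsto \phibar_\tau(x)$ is nondecreasing, so $0 < \tau_2 \leq \tau_1$ yields $\tau_2(f(x) - \finf + r(x)) \leq \tau_1(f(x) - \finf + r(x))$; adding the common term $\|c(x)\|_2$ gives $\phibar_{\tau_2}(x) \leq \phibar_{\tau_1}(x)$.

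For (iii), I would establish $\phibar_{\tau_{k+1}}(x_{k+1}) \leq \phibar_{\tau_k}(x_k)$ in two steps, first varying the point and then the parameter. Splitting on the acceptance test: if $k\in\Scal$ then $x_{k+1} = x_k + s_k$, and Line~\ref{line:check-suff-decrease} combined with $\Delta q_k(s_k,\tau_k) > 0$ from Lemma~\ref{lem:Deltaqk-positive} gives $\Phi_{\tau_k}(x_{k+1}) < \Phi_{\tau_k}(x_k)$, which by (i) transfers to $\phibar_{\tau_k}(x_{k+1}) < \phibar_{\tau_k}(x_k)$; if $k\notin\Scal$ then $x_{k+1} = x_k$ and trivially $\phibar_{\tau_k}(x_{k+1}) = \phibar_{\tau_k}(x_k)$. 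Either way $\phibar_{\tau_k}(x_{k+1}) \leq \phibar_{\tau_k}(x_k)$. Since $\tau_{k+1} \leq \tau_k$ by Lemma~\ref{lem:tau-mono}, applying (ii) at the point $x_{k+1}$ gives $\phibar_{\tau_{k+1}}(x_{k+1}) \leq \phibar_{\tau_k}(x_{k+1})$, and chaining the two inequalities proves the claim.

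The main (and really only) obstacle is the bookkeeping in (iii): one must coordinate the decrease coming from step acceptance with the decrease coming from the merit-parameter update and verify that they act in the same direction. This is exactly what the shift by $\finf$ secures---without it, lowering $\tau$ could raise $\Phi_\tau$ and destroy monotonicity, whereas the nonnegativity of $f - \finf + r$ guarantees that reducing $\tau$ can only reduce $\phibar_\tau$. I would therefore emphasize (ii) as the linchpin that makes the parameter update compatible with the descent property recorded in Lemma~\ref{lem:Deltaqk-positive}.
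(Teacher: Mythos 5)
Your proposal is correct and follows essentially the same route as the paper: cancellation of the constant shift for (i), nonnegativity of $f - \finf + r$ for (ii), and for (iii) the same chaining of the acceptance test (via part (i)) with the merit-parameter monotonicity from Lemma~\ref{lem:tau-mono} (via part (ii)). The only difference is presentational — you split (iii) into explicit cases on $k\in\Scal$, whereas the paper compresses the same argument into a single chain of inequalities.
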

\begin{proof}
For part (i), it follows from the definitions of $\phibar_\tau$ and $\phi_\tau$ that 
\begin{align*}
\phibar_\tau(x) - \phibar_\tau(y)
&= \tau\big(f(x) - \finf + r(x)\big) + \|c(x)\|_2 - \tau\big(f(y) - \finf + r(y)\big) - \|c(y)\|_2 \\
&= \tau\big(f(x) + r(x)\big) + \|c(x)\|_2 - \tau\big(f(y) + r(y)\big) - \|c(y)\|_2 \\
&= \phi_\tau(x) - \phi_\tau(y),
\end{align*}
which proves part (i). For (ii), the definition of $\finf$ and nonnegativity of $r$ imply  that
$$
\phibar_{\tau_2}(x)
= \tau_2\big(f(x) - \finf + r(x)\big) + \|c(x)\|_2
\leq \tau_1\big(f(x) - \finf + r(x)\big) + \|c(x)\|_2
= \phibar_{\tau_1}(x),
$$
which proves (ii). Finally, for each $k\in\N{}$, it follows from Lemma~\ref{lem:tau-mono}, parts (i) and (ii) of the current lemma, and how $x_{k+1}$ is updated in Algorithm~\ref{alg:main}  that
$$
\phibar_{\tau_k}(x_k) - \phibar_{\tau_{k+1}}(x_{k+1})
\geq \phibar_{\tau_k}(x_k) - \phibar_{\tau_k}(x_{k+1})
= \phi_{\tau_k}(x_k) - \phi_{\tau_k}(x_{k+1}) \geq 0,
$$
which completes the proof of this theorem.
\end{proof}

We may now state our worst-case complexity result for Algorithm~\ref{alg:main}.

\begin{theorem}\label{thm:complexity}
Suppose that  Assumption~\ref{ass:basic} and Assumption~\ref{ass:J-singular-value} hold, and let $\epsilon \in \R{}_{>0}$ be given.  If $\{k_1,k_2\}\subset\N{}$ are two iterations with $k_1 < k_2$ such that $k\in\Scal$ and 
$\chi_k > \epsilon$ for all iterations $k_1 \leq k < k_2$, then it follows that
\begin{equation}\label{diff-of-its}
k_2 - k_1 \leq \left\lfloor \frac{\tau_0\big( f(x_0) - \finf + r(x_0) \big) + \|c(x_0)\|_2 }{\kappaPhi\epsilon^2} \right\rfloor
\end{equation}
with $\kappaPhi := \eta \min\{\sigma_u\taumin\alphamin,\tfrac{\sigma_c\sigmamin^2}{2\kappac(1+\kappaJ^2)}, \tfrac{\sigma_c\sigmamin^2\kappav\alphamin}{2\kappac} 
\}$. 
Moreover, the maximum number of iterations before 
$\chi_k \leq \epsilon$ for some iteration $k\in\N{}$ is
\begin{equation}\label{eq:main-complexity-bound}
\left(
\max\left\{0,
\left\lceil
\frac{\log\Big(\frac{\taumin}{\alpha_0(\taumin \lipg + \lipJ)}\Big)}{\log(\xi)}
\right\rceil
\right\}
 + 1 \right)
\left\lfloor \frac{\tau_0\big( f(x_0) - \finf + r(x_0) \big) + \|c(x_0)\|_2 }{\kappaPhi\epsilon^2} \right\rfloor. 
\end{equation}
\end{theorem}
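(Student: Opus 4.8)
The plan is to reduce the bound to a per-iteration decrease of the shifted merit function $\phibar_\tau$. By parts~(i)--(iii) of Lemma~\ref{lem:taubar-props}, the sequence $\{\phibar_{\tau_k}(x_k)\}$ is monotonically decreasing, it is bounded below by $0$ (since $f \geq \finf$, $r \geq 0$, and $\tau_k > 0$), and for every successful iteration $k \in \Scal$ it satisfies $\phibar_{\tau_k}(x_k) - \phibar_{\tau_{k+1}}(x_{k+1}) \geq \phi_{\tau_k}(x_k) - \phi_{\tau_k}(x_k+s_k) \geq \eta\, \Delta q_k(s_k,\tau_k)$, where the last inequality is Line~\ref{line:check-suff-decrease}. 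Hence the crux is to show that $\Delta q_k(s_k,\tau_k) \geq (\kappaPhi/\eta)\epsilon^2$ for every successful iteration $k$ with $\chi_k > \epsilon$.

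First I would rewrite the optimality residual. From the optimality system~\eqref{kkt:u} we have $g_k + g_{r,k} - J_k^T y_k = -\tfrac{1}{\alpha_k}u_k$, so $\chi_k = \max\{\tfrac{1}{\alpha_k}\|u_k\|_2,\ \|c_k\|_2\}$, and I would split into two cases depending on which term of this maximum exceeds $\epsilon$. If $\|c_k\|_2 > \epsilon$, then Assumption~\ref{ass:J-singular-value} gives $\|J_k^T c_k\|_2 \geq \sigmamin\|c_k\|_2 > \sigmamin\epsilon$; discarding the nonnegative $\sigma_u$-term in Lemma~\ref{lem:Deltaqk-positive}, invoking Lemma~\ref{lem:constraint-violation-change}, and using $\alpha_k \geq \alphamin$ (Lemma~\ref{lem:alpha-bounded-below}) then yields $\Delta q_k(s_k,\tau_k) \geq \tfrac{\sigma_c\sigmamin^2}{2\kappac}\min\{\tfrac{1}{1+\kappaJ^2},\kappav\alphamin\}\epsilon^2$, which is the minimum of the second and third entries defining $\kappaPhi$. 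If instead $\tfrac{1}{\alpha_k}\|u_k\|_2 > \epsilon$, then $v_k^T u_k = 0$ gives $\|s_k\|_2^2 \geq \|u_k\|_2^2 > \alpha_k^2\epsilon^2$, so keeping only the $\sigma_u$-term of Lemma~\ref{lem:Deltaqk-positive} and using $\tau_k \geq \taumin$ (Lemma~\ref{lem:tau-bounded-below}) and $\alpha_k \geq \alphamin$ gives $\Delta q_k(s_k,\tau_k) \geq \sigma_u\tau_k\alpha_k\epsilon^2 \geq \sigma_u\taumin\alphamin\epsilon^2$, the first entry defining $\kappaPhi$. Taking the smaller of the two cases produces $\Delta q_k(s_k,\tau_k) \geq (\kappaPhi/\eta)\epsilon^2$ exactly.

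With this decrease established, the contiguous bound~\eqref{diff-of-its} follows by summing $\phibar_{\tau_k}(x_k) - \phibar_{\tau_{k+1}}(x_{k+1}) \geq \kappaPhi\epsilon^2$ over $k_1 \leq k < k_2$, which telescopes to $(k_2 - k_1)\kappaPhi\epsilon^2 \leq \phibar_{\tau_{k_1}}(x_{k_1}) - \phibar_{\tau_{k_2}}(x_{k_2})$; bounding the right-hand side above by $\phibar_{\tau_0}(x_0) = \tau_0(f(x_0)-\finf+r(x_0)) + \|c(x_0)\|_2$ via monotonicity and below by $0$, then dividing and inserting the floor (legitimate since $k_2-k_1 \in \N{}$), gives~\eqref{diff-of-its}. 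For the global count~\eqref{eq:main-complexity-bound}, I would invoke the bound~\eqref{bd-unsuccesful} on the total number of unsuccessful iterations: these separate the iterations preceding the first index with $\chi_k \leq \epsilon$ into at most one more than that many maximal runs of consecutive successful iterations, each of length at most the right-hand side of~\eqref{diff-of-its}, so the total is bounded by the product of the two quantities.

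I expect the two-case lower bound on $\Delta q_k$ to be the main obstacle, mainly in its bookkeeping: one must recognize through~\eqref{kkt:u} that the stationarity residual is exactly $\tfrac{1}{\alpha_k}\|u_k\|_2$, and then arrange the feasibility case (retaining the full $\min\{\tfrac{1}{1+\kappaJ^2},\kappav\alphamin\}$, which supplies two of the three terms of $\kappaPhi$) and the stationarity case so that the combined constant is precisely the stated three-way minimum. The remaining telescoping and the blockwise counting of successful and unsuccessful iterations are routine once the uniform lower bounds $\tau_k \geq \taumin$ and $\alpha_k \geq \alphamin$ from Lemmas~\ref{lem:tau-bounded-below} and~\ref{lem:alpha-bounded-below} are in place.
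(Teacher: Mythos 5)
Your proposal is correct and follows essentially the same route as the paper: reduce to a per-iteration decrease of $\phibar_{\tau_k}(x_k)$ via Lemma~\ref{lem:taubar-props} and Line~\ref{line:check-suff-decrease}, lower-bound $\Delta q_k(s_k,\tau_k)$ using Lemmas~\ref{lem:Deltaqk-positive}, \ref{lem:constraint-violation-change}, \ref{lem:tau-bounded-below}, \ref{lem:alpha-bounded-below}, the identity $g_k+g_{r,k}-J_k^Ty_k=-\tfrac{1}{\alpha_k}u_k$ from~\eqref{kkt:u}, and $\|J_k^Tc_k\|_2\geq\sigmamin\|c_k\|_2$, then telescope and multiply by the unsuccessful-iteration bound~\eqref{bd-unsuccesful}. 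The only (cosmetic) difference is that you case-split on which component of the max in $\chi_k$ exceeds $\epsilon$, whereas the paper keeps both terms and uses $c_1A^2+c_2B^2\geq\min\{c_1,c_2\}\max\{A^2,B^2\}$; both yield exactly the stated $\kappaPhi$.
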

\begin{proof}
Let $\{k_1,k_2\}\subset\N{}$ be as described in the theorem statement.
Then, it follows from Lemma~\ref{lem:tau-mono}, Lemma~\ref{lem:taubar-props}(i--ii), Line~\ref{line:check-suff-decrease} of Algorithm~\ref{alg:main},  Lemma~\ref{lem:Deltaqk-positive}, Lemma~\ref{lem:tau-bounded-below}, and \eqref{lb-alpha-dependence-on-tau} that the following inequalities hold for all $k_1 \leq k < k_2$: 
\begin{align*}
    \phibar_{\tau_k}(x_k) - \phibar_{\tau_{k+1}}(x_{k+1})
    &\geq \phibar_{\tau_k}(x_k) - \phibar_{\tau_k}(x_{k+1}) \\
    &= \Phi_{\tau_k}(x_k) - \Phi_{\tau_k}(x_{k+1}) \\
    &\geq \eta \Delta q_k(s_k,\tau_{k}) \\
    & \geq \eta\tfrac{\sigma_u\tau_{k}}{\alpha_{k}}\|s_k\|_2^2 + \eta\sigma_c\big( \|c_k\|_2 - \|c_k + J_k v_k\|_2\big) \\
    &\geq \eta\sigma_u\taumin\alphamin \big(\tfrac{\|s_k\|_2}{\alpha_k}\big)^2 + \eta\sigma_c\big( \|c_k\|_2 - \|c_k + J_k v_k\|_2\big).
\end{align*}
Combining this inequality with $s_k = v_k + u_k$ and $v_k^Tu_k = 0$ for all $k\in\N{}$, Lemma~\ref{lem:constraint-violation-change}, Lemma~\ref{lem:alpha-bounded-below}, \eqref{kkt:u}, and Assumption~\ref{ass:J-singular-value} it follows, for all $k_1 \leq k < k_2$, that
\begin{align*}
    &\phibar_{\tau_k}(x_k) - \phibar_{\tau_{k+1}}(x_{k+1})  \\
	&\geq \eta\sigma_u\taumin\alphamin \big(\tfrac{\|u_k\|_2}{\alpha_k}\big)^2 + \eta\sigma_c \tfrac{1}{2\kappa_c} \|J_k^T c_k\|_2^2 \min\{ (1/(1+\kappaJ^2), \kappav\alphamin\} \\
	&\geq \eta\sigma_u\taumin\alphamin\|g_k + g_{r,k} - J_k^T y_k\|_2^2 + \eta\sigma_c \tfrac{\sigmamin^2}{2\kappa_c} \|c_k\|_2^2 \min\{ (1/(1+\kappaJ^2), \kappav\alphamin\} \\
	&\geq \kappaPhi \chi_k^2,
\end{align*}
where $\kappaPhi$ is defined in the theorem statement.
Using this inequality, Lemma~\ref{lem:taubar-props}(iii), and nonnegativity of $\phibar_{\tau}$ for all $\tau\in\R{}_{>0}$, 
we find that 
\begin{align*}
    \phibar_{\tau_0}(x_0)
    &\geq \phibar_{\tau_{k_1}}(x_{k_1}) 
    \geq \phibar_{\tau_{k_1}}(x_{k_1}) - \phibar_{\tau_{k_2}}(x_{k_2})  \\
    &\geq \sum_{k = k_1}^{k_2-1} \big(\phibar_{\tau_k}(x_k) - \phibar_{\tau_{k+1}}(x_{k+1}) \big) 
    \geq \sum_{k = k_1}^{k_2-1}\kappaPhi \chi_k^2, 
\end{align*}
which may then be combined with the fact that $\chi_k > \epsilon$ for all iterations $k_1 \leq k \leq k_2$ (see the assumptions of the current theorem) to conclude that
$$
\phibar_{\tau_0}(x_0)
\geq (k_2-k_1)\kappaPhi\epsilon^2,
$$
from which~\eqref{diff-of-its} follows.  

The final result in the theorem, namely the claimed upper bound on the maximum iterations before $\chi_k \leq \epsilon$, follows from what we just proved and the fact that maximum number of unsuccessful iterations is bounded as in~\eqref{bd-unsuccesful}.
\end{proof}

Before proving a result concerning convergence to a KKT point, we need to prove that the Lagrange multiplier estimates generated by subproblem~\eqref{prob:prox.sqp.relaxed} are bounded.
\begin{lemma}\label{lem:y-bounded}
The Lagrange multiplier estimate sequence $\{y_k\}$ is bounded.
\end{lemma}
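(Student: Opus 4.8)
The plan is to obtain an explicit expression for $y_k$ from the first-order optimality conditions of subproblem~\eqref{prob:prox.sqp.relaxed} and then bound it uniformly using the singular-value lower bound from Assumption~\ref{ass:J-singular-value} (which is in force throughout this subsection). Since the lemma appears after Theorem~\ref{thm:complexity}, I take both Assumption~\ref{ass:basic} and Assumption~\ref{ass:J-singular-value} as given.

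First I would recall the stationarity condition~\eqref{kkt:u}, which states that for each $k$ there exist $g_{r,k}\in\partial r(x_k+s_k)$ and $y_k\in\R{m}$ with
\begin{equation*}
J_k^T y_k = g_k + \tfrac{1}{\alpha_k} u_k + g_{r,k}
\ \ \text{and} \ \
J_k u_k = 0.
\end{equation*}
A direct term-by-term bound on the right-hand side is awkward because of the factor $\tfrac{1}{\alpha_k}u_k$. The key step is instead to left-multiply the first equation by $J_k$: the constraint $J_k u_k = 0$ then annihilates the tangential contribution, yielding $J_k J_k^T y_k = J_k(g_k + g_{r,k})$. This is the crucial simplification, since it removes any dependence on $u_k$ (and hence any need to bound $\|u_k\|_2$ or control $1/\alpha_k$).

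Next I would invoke Assumption~\ref{ass:J-singular-value}, under which $J_k J_k^T \succeq \sigmamin^2 I$ is invertible with $\|(J_k J_k^T)^{-1}\|_2 \leq \sigmamin^{-2}$, so that $y_k = (J_k J_k^T)^{-1} J_k (g_k + g_{r,k})$. Taking norms and using the bounds $\|J_k\|_2 = \|J_k^T\|_2 \leq \kappaJ$, $\|g_k\|_2 \leq \kappag$, and $\|g_{r,k}\|_2 \leq \kappagr$ from~\eqref{eq:consequence-of-basic-assumption}, together with the triangle inequality, gives the uniform bound
\begin{equation*}
\|y_k\|_2
\leq \|(J_k J_k^T)^{-1}\|_2 \, \|J_k\|_2 \, \|g_k + g_{r,k}\|_2
\leq \frac{\kappaJ(\kappag + \kappagr)}{\sigmamin^2}
\ \ \text{for all $k\in\N{}$,}
\end{equation*}
which establishes boundedness of $\{y_k\}$. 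I do not anticipate any genuine obstacle here; the only point requiring care is the recognition that one should eliminate $u_k$ by multiplying through by $J_k$ rather than attempting to control it directly.
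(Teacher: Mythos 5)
Your proof is correct, and it takes a recognizably different (and slightly slicker) route than the paper's. Both arguments start from the stationarity condition~\eqref{kkt:u} and invert the normal equations via $(J_kJ_k^T)^{-1}J_k$, but the paper carries the term $\tfrac{1}{\alpha_k}u_k$ through this inversion and therefore needs an auxiliary step: it first derives $\tfrac{1}{\alpha_k}\|u_k\|_2 \leq \kappag+\kappagr$ from~\eqref{eq:uT} before bounding $\|y_k\|_2 \leq \tfrac{2}{\sigmamin}(\kappag+\kappagr)$. You instead observe that left-multiplying by $J_k$ annihilates the tangential term because $J_ku_k=0$, which removes any dependence on $u_k$ and on $\alpha_k$ and makes the auxiliary bound unnecessary; your argument is shorter and, in effect, yields the sharper identity $y_k=(J_kJ_k^T)^{-1}J_k(g_k+g_{r,k})$. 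The only cosmetic difference in the constants is that you bound $\|(J_kJ_k^T)^{-1}\|_2\,\|J_k\|_2 \leq \kappaJ/\sigmamin^2$ submultiplicatively, whereas the paper uses the operator norm $\|(J_kJ_k^T)^{-1}J_k\|_2\leq 1/\sigmamin$ directly; either way a uniform bound results, which is all the lemma requires.
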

\begin{proof}
Note from~\eqref{eq:uT} and the Cauchy-Schwarz and triangle inequalities that
$$
\tfrac{1}{\alpha_k}\|u_k\|_2^2
= -(g_k+g_{r,k})^Tu_k
\leq \|g_k + g_{r_k}\|_2\|u_k\|_2
\leq (\|g_k\|_2 + \|g_{r_k}\|_2) \|u_k\|_2,
$$
which when combined with~\eqref{eq:consequence-of-basic-assumption} shows that
\begin{equation}\label{bd-uoveralpha}
\tfrac{1}{\alpha_k}\|u_k\|_2 
\leq \kappag + \kappagr.
\end{equation}
Also observe that it follows from~\eqref{kkt:u} and Assumption~\ref{ass:J-singular-value} that
\begin{equation}\label{y-is-equal-to}
J_k^T y_k = 
g_k + \tfrac{1}{\alpha_k} u_k +  g_{r,k}
\iff
y_k = (J_kJ_k^T)^{-1}J_k\big( g_k + \tfrac{1}{\alpha_k} u_k +  g_{r,k}\big).
\end{equation}
Combining~\eqref{y-is-equal-to}, Assumption~\ref{ass:J-singular-value}, the triangle inequality, and \eqref{bd-uoveralpha}
it follows that
\begin{align*}
\|y_k\|_2
&\leq  \tfrac{1}{\sigmamin}\|g_k + \tfrac{1}{\alpha_k} u_k +  g_{r,k}\|_2 \\
&\leq \tfrac{1}{\sigmamin}
\Big(
\kappag + \kappagr + \tfrac{1}{\alpha_k} \|u_k\|_2
\Big) \\
&\leq \tfrac{2}{\sigmamin}
\Big(
\kappag + \kappagr 
\Big).
\end{align*}
Since this result holds for arbitrary $k\in\N{}$, we have proved the result.
\end{proof}

We can now prove that limit points of the primal sequence are KKT points.

\begin{theorem}\label{thm:limit-kkt}
Let Assumption~\ref{ass:basic} and Assumption~\ref{ass:J-singular-value} hold. Any limit point $\xstar$ of the sequence $\{x_k\}$ is a first-order KKT point for problem~\eqref{prob:main}, i.e., $c(\xstar) = 0$ and there exist vectors $\ystar\in\R{m}$ and $g_{r,*}\in \partial r(\xstar)$ such that $g(\xstar) + g_{r,*} - J(\xstar)^T\ystar = 0$.
\end{theorem}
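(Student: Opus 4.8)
The plan is to combine the complexity machinery already established with the first key feasibility result to extract a convergent subsequence along which all relevant residuals vanish. Let $\xstar$ be a limit point of $\{x_k\}$, so there is a subsequence $\Kcal_0$ with $\lim_{k\in\Kcal_0} x_k = \xstar$. Continuity of $c$ immediately gives $\lim_{k\in\Kcal_0} c_k = c(\xstar)$, so the first task is to argue $c(\xstar) = 0$, and the second is to produce multipliers realizing stationarity in the limit.

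For the feasibility claim, I would invoke the complexity result. By Theorem~\ref{thm:complexity}, the total number of successful iterations with $\chi_k > \epsilon$ is finite for every $\epsilon > 0$, and by Lemma~\ref{lem:alpha-bounded-below} the number of unsuccessful iterations is finite as well; since $\{\alpha_k\}$ stabilizes at $\alphamin$ and $\{\tau_k\}$ at some $\taumin > 0$, all but finitely many iterations are successful. Together these force $\liminf_{k\to\infty}\chi_k = 0$, hence $\liminf_{k\to\infty}\|c_k\|_2 = 0$. The more delicate point is upgrading this to $c(\xstar)=0$ along the chosen subsequence $\Kcal_0$: here I would use Lemma~\ref{lem:taubar-props}(iii), which says $\{\phibar_{\tau_k}(x_k)\}$ is monotonically decreasing and (being nonnegative) convergent, combined with the per-iteration decrease $\phibar_{\tau_k}(x_k) - \phibar_{\tau_{k+1}}(x_{k+1}) \geq \kappaPhi \chi_k^2$ derived in the proof of Theorem~\ref{thm:complexity}. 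Summability of $\chi_k^2$ over successful iterations yields $\lim_{k\to\infty}\chi_k = 0$ (not just $\liminf$), and in particular $\lim_{k\to\infty}\|c_k\|_2 = 0$, so $c(\xstar) = 0$ for any limit point.

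For the stationarity claim, I would use the stationarity residual embedded in $\chi_k$. Since $\lim_{k\to\infty}\chi_k = 0$, definition~\eqref{def:chik} gives $\lim_{k\to\infty}\|g_k + g_{r,k} - J_k^T y_k\|_2 = 0$. By Lemma~\ref{lem:y-bounded} the multiplier sequence $\{y_k\}$ is bounded, so along a further subsequence $\Kcal_1 \subseteq \Kcal_0$ we have $y_k \to \ystar$ for some $\ystar \in\R{m}$; similarly $\{g_{r,k}\}$ is bounded since $g_{r,k}\in\partial r(x_k+s_k)$ and $\partial r$ is bounded over $\Xcal$ by Assumption~\ref{ass:basic}, so we may pass to a further subsequence along which $g_{r,k} \to g_{r,*}$. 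The main technical obstacle is verifying $g_{r,*}\in\partial r(\xstar)$: this requires the closedness (outer semicontinuity) of the subdifferential mapping $\partial r$ together with the fact that $x_k + s_k \to \xstar$. The latter holds because $s_k \to 0$, which I would establish from $\tfrac{1}{\alpha_k}\|s_k\|_2 \to 0$ (a consequence of $\chi_k \to 0$ via the decrease inequality controlling $(\|s_k\|_2/\alpha_k)^2$) combined with the uniform lower bound $\alpha_k \geq \alphamin$ from~\eqref{lb-alpha-dependence-on-tau}.

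Finally, passing to the limit in $g_k + g_{r,k} - J_k^T y_k \to 0$ along $\Kcal_1$, using continuity of $g = \nabla f$ and $J = \nabla c^T$, yields $g(\xstar) + g_{r,*} - J(\xstar)^T\ystar = 0$. Combined with $c(\xstar) = 0$, this is exactly the first-order KKT system, completing the proof. I expect the closedness argument for $\partial r$ (justifying $g_{r,*}\in\partial r(\xstar)$) to be the crux, since it is the one step that relies on a property of the subdifferential of the convex function $r$ beyond mere boundedness; everything else is continuity and extraction of convergent subsequences from bounded quantities.
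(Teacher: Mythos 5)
Your proposal is correct and follows the same overall architecture as the paper's proof: drive the KKT residual $\chi_k$ to zero, use Lemma~\ref{lem:y-bounded} to extract a convergent multiplier subsequence, and pass to the limit using continuity of $g$, $J$, and $c$. You are, however, more careful than the paper in two places. First, the paper asserts that Theorem~\ref{thm:complexity} yields a subsequence $\Kcal_2\subseteq\Kcal_1$ of the \emph{given} convergent subsequence along which $\chi_k\to 0$, whereas the theorem as stated only controls how long $\chi_k$ can stay above $\epsilon$ over all of $\N{}$; your route via summability of $\kappaPhi\chi_k^2$ against the telescoping decrease of $\phibar_{\tau_k}(x_k)$, together with the finiteness of unsuccessful iterations from~\eqref{bd-unsuccesful}, gives $\lim_{k\to\infty}\chi_k=0$ for the whole sequence and makes the restriction to $\Kcal_0$ immediate. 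Second, the paper simply \emph{defines} $g_{r,*}$ as the limit of $-g_k+J_k^Ty_k$ without verifying $g_{r,*}\in\partial r(\xstar)$; you correctly identify that this requires outer semicontinuity of $\partial r$ (valid for a finite-valued convex $r$) together with $x_k+s_k\to\xstar$, the latter following from summability of $\tfrac{\sigma_u\tau_k}{\alpha_k}\|s_k\|_2^2$ with $\tau_k\geq\taumin$ and $\alpha_k\leq\alpha_0$. One trivial slip: to pass from $\|s_k\|_2/\alpha_k\to 0$ to $\|s_k\|_2\to 0$ you need the upper bound $\alpha_k\leq\alpha_0$, not the lower bound $\alpha_k\geq\alphamin$; this does not affect the argument.
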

\begin{proof}
Let $\xstar$ be a limit point of $\{x_k\}$, i.e., there exists $\Kcal_1$ so that $\{x_k\}_{k\in\Kcal_1}\to  \xstar$.  Theorem~\ref{thm:complexity} allows us to conclude that there exists a subsequence $\Kcal_2\subseteq\Kcal_1$ so that
\begin{equation}\label{chi-to-zero}
0 = \lim_{k\in\Kcal_2} \chi_k
=  \lim_{k\in\Kcal_2} \max\{\|g_k+g_{r,k}-J_k^T y_k\|_2,\|c(x_k)\|_2 \}.
\end{equation}
Lemma~\ref{lem:y-bounded} allows us to assert the existence of a vector $y_*\in\R{m}$ and subsequence $\Kcal_3\subseteq\Kcal_2$ such that $\{y_k\}_{k\in\Kcal_3} = y_*$. It follows from this limit, $\{x_k\}_{k\in\Kcal_3} \to x_*$,  continuity of $g$ and $J$, and~\eqref{chi-to-zero} that 
$$
\lim_{k\in\Kcal_3} g_{r,k}
= \lim_{k\in\Kcal_3} ( -g_k + J_k^T y_k ) 
= -g(x_*) + J(x_*)^T y_* =: g_{r,*}.
$$
Finally, combining this equality with $\{x_k\}_{k\in\Kcal_3} \to x_*$, continuity of $c$, and~\eqref{chi-to-zero} it follows that $g(x_*) + g_{r,*} - J(x_*)^T y_* = 0$ and $c(x_*) = 0$, which completes the proof.
\end{proof}


\subsubsection{Strong LICQ fails}

In this section we prove properties of the iterate sequence $\{x_k\}$ in Algorithm~\ref{alg:main} when the strong LICQ assumption used in the previous section (see Assumption~\ref{ass:J-singular-value}) does not hold. In such a setting, we should expect to prove weaker results since, for example, Lagrange multipliers may not even exist. 

Our main theorem of this section uses the quantity
\begin{equation}\label{def:chibark}
\chibar_k := \max\{\|g_k + g_{r,k} - J_k^T y_k\|_2, \|J^T_kc_k\|_2\},
\end{equation}
which is related to the quanity $\chi_k$ used in the previous section (see~\eqref{def:chik}). 

\begin{theorem}\label{thm:no-licq}
Let Assumption~\ref{ass:basic} hold. One of the following two cases occurs.
\begin{itemize}
\item[(i)] There exists $\taubarmin > 0$ such that $\tau_k \geq \taubarmin$ for all $k\in\N{}$.  In this case, it also follows that $\alpha_k \geq \alphabarmin 
:= 
\min\{\alpha_0, \xi\taubarmin/(\taubarmin \lipg + \lipJ)\}$ for all $k\in\N{}$ and,  
for a given $\epsilon > 0$, the maximum number of iterations before $\chibar_k \leq \epsilon$ is
\begin{equation*}
\!\!\!\!\!\!\!\!\!\!\!\!\left(
\max\left\{0,
\left\lceil
\frac{\log\Big(\frac{\taubarmin}{\alpha_0(\taubarmin \lipg + \lipJ)}\Big)}{\log(\xi)}
\right\rceil
\right\}
 + 1 \right)
\left\lfloor \frac{\tau_0\big( f(x_0) - \finf + r(x_0) \big) + \|c(x_0)\|_2 }{\kappabarPhi\epsilon^2} \right\rfloor
\end{equation*}
where $\kappabarPhi := \eta\min\{\sigma_u\taubarmin\alphabarmin, \tfrac{\sigma_c}{2\kappac(1+\kappaJ^2)}, \tfrac{\sigma_c\kappav\alphabarmin}{2\kappac} \}$.
\item[(ii)] The merit parameter values converge to zero, i.e., $\lim_{k\to\infty} \tau_k = 0$.  In this case, there exists a subsequence $\Kcal\subseteq\N{}$ such that $\lim_{k\in\Kcal} \|J_k^Tc_k\|_2 = 0$. 
\end{itemize}
\end{theorem}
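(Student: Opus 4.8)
The plan is to dichotomize on the behavior of the merit parameter sequence $\{\tau_k\}$, which by Lemma~\ref{lem:tau-mono} is positive and monotonically decreasing and hence has a limit $\taumin^* := \lim_{k\to\infty}\tau_k \geq 0$. Either this limit is strictly positive, which gives Case~(i), or it equals zero, which gives Case~(ii). This is the natural splitting because almost every quantitative estimate in the Strong~LICQ section (Lemmas~\ref{lem:alpha-bounded-below}--\ref{lem:y-bounded}, Theorem~\ref{thm:complexity}) was driven by a \emph{uniform positive lower bound} on $\{\tau_k\}$, and I would reuse that machinery verbatim once such a bound is granted by assumption rather than derived from Assumption~\ref{ass:J-singular-value}.

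For Case~(i), suppose $\tau_k \geq \taubarmin > 0$ for all $k$. The key observation is that the proof of Lemma~\ref{lem:alpha-bounded-below} never actually used Assumption~\ref{ass:J-singular-value}: it only used the positive lower bound $\taumin$ on the merit parameter together with Lemma~\ref{cond:for-k-in-S} and Lemma~\ref{lem:Deltaqk-positive}, both of which hold under Assumption~\ref{ass:basic} alone. Thus I would replay that argument with $\taumin$ replaced by $\taubarmin$ to obtain the stated lower bound $\alphabarmin$ on $\{\alpha_k\}$ and the same bound on the number of unsuccessful iterations. The complexity count then mirrors Theorem~\ref{thm:complexity}, with one crucial change: without Assumption~\ref{ass:J-singular-value} I cannot pass from $\|J_k^Tc_k\|_2^2$ to $\sigmamin^2\|c_k\|_2^2$, so the per-iteration decrease in $\phibar_{\tau_k}$ must be lower-bounded in terms of $\chibar_k$ (which measures $\|J_k^Tc_k\|_2$) rather than $\chi_k$ (which measures $\|c_k\|_2$). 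Concretely, I would use Lemma~\ref{lem:constraint-violation-change} directly, which bounds $\|c_k\|_2 - \|c_k+J_kv_k\|_2$ below by $\tfrac{1}{2\kappac}\|J_k^Tc_k\|_2^2\min\{1/(1+\kappaJ^2),\kappav\alphabarmin\}$, and combine it with $\tfrac{1}{\alpha_k}\|u_k\|_2 \geq \|g_k+g_{r,k}-J_k^Ty_k\|_2$ coming from~\eqref{kkt:u}. This yields $\phibar_{\tau_k}(x_k) - \phibar_{\tau_{k+1}}(x_{k+1}) \geq \kappabarPhi\,\chibar_k^2$ with the stated $\kappabarPhi$, after which the telescoping-sum argument of Theorem~\ref{thm:complexity} finishes the count.

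For Case~(ii), suppose $\tau_k \to 0$. Since the update~\eqref{update:tau} decreases $\tau_k$ only when $\tauktrial < \tau_{k-1}$ and then by at least a factor $(1-\epsilon_\tau)$, the limit $\tau_k \to 0$ forces $\tau_k$ to be strictly decreased infinitely often; let $\Kcal := \{k : \tau_k < \tau_{k-1}\}$, which is therefore infinite. For each such $k$, Lemma~\ref{lem:lower-bound-tau-1}(i) guarantees $J_k^Tc_k \neq 0$ (otherwise $\tauktrial = \infty$ and no decrease occurs), and Lemma~\ref{lem:lower-bound-tau-1}(ii) gives $\tau_{k-1} \geq \epsilon_\tau\|J_k^Tc_k\|_2$. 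Since $\{\tau_{k-1}\}_{k\in\Kcal} \to 0$, this inequality immediately forces $\lim_{k\in\Kcal}\|J_k^Tc_k\|_2 = 0$, which is exactly the claim. I should double-check that the constant denoted $\epsilon_\tau$ in Lemma~\ref{lem:lower-bound-tau-1}(ii) is the multiplicative reciprocal needed here and adjust notation accordingly.

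The main obstacle is purely bookkeeping rather than conceptual: I must verify that every lemma invoked in Case~(i) genuinely holds under Assumption~\ref{ass:basic} alone, since the ambient subsection assumed Assumption~\ref{ass:J-singular-value}. In particular I would re-examine whether the Lagrange multiplier $y_k$ in~\eqref{kkt:u}, and hence $\chibar_k$, is even well-defined without a rank condition on $J_k$; the resolution is that $y_k$ arises as a multiplier for the equality constraint $J_ku = 0$ in the strongly convex problem~\eqref{prob:prox.sqp.relaxed}, whose feasible set is the nonempty subspace $\ker J_k$, so optimality furnishes \emph{some} $y_k$ satisfying~\eqref{kkt:u} regardless of the rank of $J_k$. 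I do not claim $\{y_k\}$ is bounded in Case~(i) (Lemma~\ref{lem:y-bounded} does use Assumption~\ref{ass:J-singular-value}), but the complexity bound only needs the descent estimate $\tfrac{1}{\alpha_k}\|u_k\|_2 \geq \|g_k+g_{r,k}-J_k^Ty_k\|_2$, which follows from~\eqref{kkt:u} directly, so boundedness of $\{y_k\}$ is not required.
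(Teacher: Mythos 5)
Your proposal is correct, and for part~(i) it follows the paper's argument essentially verbatim: assume the uniform lower bound $\taubarmin$ on $\{\tau_k\}$, observe that the proof of Lemma~\ref{lem:alpha-bounded-below} used only that lower bound (not Assumption~\ref{ass:J-singular-value}), and then rerun the telescoping argument of Theorem~\ref{thm:complexity} while keeping the per-iteration decrease in terms of $\|J_k^Tc_k\|_2^2$ (via Lemma~\ref{lem:constraint-violation-change}) rather than converting to $\|c_k\|_2^2$, which is exactly why $\chi_k$ is replaced by $\chibar_k$ and $\kappaPhi$ by $\kappabarPhi$. Your side remarks --- that $y_k$ exists as a multiplier of the affine constraint in the strongly convex subproblem~\eqref{prob:prox.sqp.relaxed} without any rank condition, and that boundedness of $\{y_k\}$ is never needed for the complexity count --- are accurate and worth having made explicit. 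The only place you diverge from the paper is part~(ii): the paper simply invokes Theorem~\ref{thm:JTc-to-zero}, which unconditionally produces a subsequence along which $J_k^Tc_k\to 0$, whereas you construct the subsequence explicitly as the set of iterations at which $\tau_k$ is decreased and apply Lemma~\ref{lem:lower-bound-tau-1}(ii) to conclude $\|J_k^Tc_k\|_2 \leq \tau_{k-1}/\epsilon_\tau \to 0$ there. Both are valid (your set of decrease iterations is necessarily infinite when $\tau_k\to 0$, and Lemma~\ref{lem:lower-bound-tau-1}(i) guarantees $J_k^Tc_k\neq 0$ on it); your version is marginally more informative in that it identifies \emph{which} subsequence works and ties it to the merit-parameter behavior, at the cost of the notational collision with the algorithm parameter $\epsilon_\tau$ that you already flagged.
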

\begin{proof}
Let us start by considering part (i), in which case we know that there exists $\taubarmin > 0$ such that $\tau_k \geq \taubarmin$ for all $k\in\N{}$.  Using this lower bound on $\{\tau_k\}$, the proof of Lemma~\ref{lem:alpha-bounded-below} still holds (with $\taumin$ replaced by $\taubarmin$), so that both~\eqref{lb-alpha-dependence-on-tau} and~\eqref{bd-unsuccesful} hold (with $\taumin$ replaced by $\taubarmin$), thus proving the first claim on $\alphabarmin$. Using~\eqref{lb-alpha-dependence-on-tau} and~\eqref{bd-unsuccesful} (with $\taumin$ replaced by $\taubarmin$), the proof of Theorem~\ref{thm:complexity} holds almost exactly as written. In particular, the proof holds as written until the middle of the second displayed equation, where we have (now with $\taumin$ and $\alphamin$ replace by $\taubarmin$ and $\alphabarmin$, respectively) that
\begin{align*}
&\phibar_{\tau_k}(x_k) - \phibar_{\tau_{k+1}}(x_{k+1})  \\
&\geq
\eta\sigma_u\taubarmin\alphabarmin\|g_k + g_{r,k} - J_k^T y_k\|_2^2
+ \eta\sigma_c \tfrac{1}{2\kappa_c} \|J_k^T c_k\|_2^2 \min\{ (1/(1+\kappaJ^2), \kappav\alphabarmin\}.
\end{align*}
If we now use the definitions of $\kappabarPhi$ and $\chibar_k$ we find that
$$
\phibar_{\tau_k}(x_k) - \phibar_{\tau_{k+1}}(x_{k+1})
\geq \kappabarPhi\chibar_k^2.
$$
The remainder of the proof of Theorem~\ref{thm:complexity} now follows exactly as written but with $\chibar_k$ and $\kappabarPhi$ in place of $\chi_k$ and $\kappaPhi$, respectively.  This completes the proof of part (i).


Part (ii) follows from Theorem~\ref{thm:JTc-to-zero}.
\end{proof}

A discussion on Theorem~\ref{thm:no-licq}(i) is of interest. In particular, the result in Theorem~\ref{thm:no-licq}(i) is of the same form as the result Theorem~\ref{thm:complexity}, with the only difference being the values of the constants $(\taumin,\alphamin,\kappaPhi)$ versus $(\taubarmin,\alphabarmin,\kappabarPhi)$.  A consequence of Assumption~\ref{ass:J-singular-value} used in Section~\ref{sec:strong-licq} is that we have an explicit definition for $\taumin$ (see Lemma~\ref{lem:tau-bounded-below}), which implies an explicit lower bound on $\alphamin$ and $\kappaPhi$ (see Lemma~\ref{lem:alpha-bounded-below} and Theorem~\ref{thm:complexity}).  On the other hand, no explicit lower bound on $\taubarmin$ is possible (in general) when Assumption~\ref{ass:J-singular-value} does not hold (in fact, it is even possible that $\{\tau_k\} \to 0$), and therefore the values for the constants $(\taubarmin,\alphabarmin,\kappabarPhi)$ in Theorem~\ref{thm:no-licq}(i) will depend on the particular value of $\taubarmin$ for that given problem.  In this respect, the complexity result of Theorem~\ref{thm:complexity} is stronger than Theorem~\ref{thm:no-licq}(i), which is not surprising since Theorem~\ref{thm:complexity} is proved under Assumption~\ref{ass:J-singular-value}.





\section{Numerical Results}\label{sec:numerical} 

In this section, we present results of  numerical experiments performed with our Python implementation of Algorithm~\ref{alg:main}.  The test problems are formulated with an $\ell_1$ regularizer, which is a common choice in many applications since it is known to induce sparse solutions.  The goal of our numerical tests is to validate the overall performance of our method using standard optimization metrics and to evaluate its ability to correctly identify the zero-nonzero structure of a solution.  For comparison purposes, we use the solver Bazinga~\cite{de2023constrained}, which is a safeguarded augmented Lagrangian method. The details concerning the test problems, our implementation, and the test results are given in the remainder of this section.

\subsection{Test problems}\label{sec:test-problems}

We considered a special instance of an $\ell_1$-regularized objective function with equality constraints that can be written in the form
\begin{equation}~\label{prob:test}
    \min_{x \in \R{n}, a \in \R{m}} \ \ f(x) + \lambda \|a\|_1 \ \ \text{s.t.} \ \ c(x) + a = 0  
\end{equation}
for some chosen regularization parameter $\lambda \in  \R{}_{>0}$. The functions $f$ and $c$ were chosen as a subset of the CUTEst~\cite{GouOT13} test problems, and we used PyCUTEst~\cite{fowkes2022pycutest} to evaluate these functions in our Python code. Our \emph{initial} test problems were chosen as the subset of CUTEst problems that satisfied the following properties: (i) the objective function was non-constant; (ii) the problem had at least one equality constraint, no inequality constraints, and no bound constraints on variables; and (iii) the number of equality constraints and variables satisfied $1 \leq m < n \leq 1000$. The restriction $m < n$ rules out problems that essentially reduce to finding a feasible point for the constraints, while the restriction $n < 1000$ is used to keep the computational cost to a manageable level.   As for the choice of $\lambda$, one can show that if $\xbar$ is a first-order KKT point with Lagrange multiplier vector $\ybar$ to the problem 
\begin{equation}\label{prob:equality}
    \min_{x \in \R{n}} \ \ f(x) \ \  \text{s.t.} \ \  c(x) = 0,
\end{equation} 
then $(\xbar,0)$ is a first-order KKT point to problem~\eqref{prob:test} with Lagrange multiplier $\ybar$ as long as $\lambda \geq \|\ybar\|_\infty$.  Therefore, in our tests, we set $\lambda = \|\ybar\|_\infty + 10$ where $\ybar$ is computed by solving problem~\eqref{prob:equality} using  IPOPT~\cite{wachter2006implementation}.  Since problems MSS1, MSS2, and CHAIN were not successfully solved by IPOPT, they were removed from the \emph{initial} test set, thus resulting in the \emph{final} set of 46 test problems 
found in Table~\ref{tab:appendix1}--Table~\ref{tab:appendix2}.  Although the problem formulation~\ref{prob:test} is somewhat contrived, this particular formulation allows us to better evaluate the structure identifying properties of the iterates produced by Algorithm~\ref{alg:main} and Bazinga.

\subsection{Implementation details} \label{sec:implementation}

The parameter and input values used are presented in Table~\ref{tab:alg-params} (no fine-tuning was performed). As for the starting point $(x_0, a_0)$ for problem~\eqref{prob:test}, the vector $x_0$ is set to the default value supplied by CUTEst and the vector $a_{0}$ is set as $-c(x_0)$ so that the initial point $(x_0,a_0)$ is feasible.

\begin{table}[ht]
\caption{Parameters and inputs used by Algorithm~\ref{alg:main}, with $x_0$ set to the value supplied by CUTEst.}
\label{tab:alg-params}
\centering
    \begin{tabular}{cccccccc}
    \hline
    $\alpha_0$ & $\tau_{-1}$ & $\kappa_v$ & $\sigma_c$ & $\epsilon_{\tau}$ & $\xi$ & $\eta$ & $\sigma_u$   \\
    \hline
    10 & 1 & 1000 & 0.1 & 0.1 & 0.5 & $10^{-4}$ & 0.1  \\
    \hline
    \end{tabular}
\end{table}

To approximately solve the trust-region subproblem~\eqref{prob:normal.step}, as needed in  Line~\ref{line:vk} of Algorithm~\ref{alg:main}, we used a Newton-like method. In particular, assuming for now that $J_k$ had full row-rank, we first computed the minimizer of $m_k(v)$ over all  $v\in \Range(J_k^T)$. Using the relationship $v = J_k^T w$, this problem may be written as
$$
\min_{w\in\R{m}} \ \thalf \|c_k\|_2^2 + w^T J_k J_k^T c_k + \thalf w^T J_k J_k^T J_k J_k^T w.
$$
It follows from the first-order optimality conditions and the full rank assumption on $J_k$ that the unique solution, call it $w_n$, satisfies
$$
\! J_kJ_k^TJ_kJ_k^T w_n = - J_kJ_k^T c_k
\iff
J_k J_k^T w_n = - c_k.
$$
After solving this linear system for $w_n$, we have that $v_n = J_k^T w_n$.  Next, we project this Newton step $v_n$ onto the trust-region constraint by defining 
\begin{align*}
    \vbar_n := \min\{ \|v_n\|_2, \kappav\alpha_k\|J_k^Tc_k\|_2 \} \frac{v_n}{\|v_n\|_2}.
\end{align*}
Also accounting for the possibility that $J_k$ may be rank deficient, we define $v_k$ as
$$
v_k \gets
\begin{cases}
v_k^c & \text{if $J_k$ does not have full rank or $m_k(v_k^c) < m_k(\vbar_n)$} \\
\vbar_n & \text{otherwise,}
\end{cases}
$$
which by construction ensures that $v_k$ satisfies conditions \eqref{conds:vk-1}-\eqref{conds:vk-3}, as needed.


Next, to solve subproblem~\eqref{prob:prox.sqp.relaxed} (as needed in Line~\ref{line:uk} of Algorithm~\ref{alg:main}) we exploit the structure of the $\ell_1$-norm.
By introducing variables  $(p,q) \in \R{n}_{\geq 0} \times \R{n}_{\geq 0}$ and using $e$ to denote the vector of all ones, we can consider the equivalent problem
\begin{equation}~\label{prob:tangential-qp}
    \begin{aligned}
    \min_{u,p,q} \ \ & g_k^T u + \tfrac{1}{2\alpha_k} \|u\|_2^2 + \lambda e^T(p + q) \\
    \text{s.t.} \ & J_k u = 0, \ \ 
     x_k + v_k + u = p - q, \ \ 
     p \geq 0, \ \ q \geq 0,
\end{aligned}
\end{equation}
which is a convex quadratic program (QP). To solve subproblem~\eqref{prob:tangential-qp} we use the primal active-set QP solver in the state-of-the-art software Gurobi version 11.0.0~\cite{gurobi}. Note that only a subset of the optimization variables receive $\ell_1$ regularization in the test problem formulation  (see~\eqref{prob:test}).  This setting is handled using the above scheme by introducing $p$ and $q$ variables only for those variables appearing in the $\ell_1$ norm.

Algorithm~\ref{alg:main} was terminated when one of the following conditions was satisfied.
\begin{itemize}
    \item \textbf{Approximate KKT point.}   Algorithm~\ref{alg:main} was terminated during the $k$th iteration with $x_k$ considered an approximate KKT point if $\|c_k\|_2 \leq 10^{-6}$ and $\|g_k+g_{r,k} - J_k^T y_k\|_2 \leq 10^{-6}$, as motivated by~\eqref{def:chik} and Theorem~\ref{thm:complexity}.
    \item \textbf{Approximate infeasible stationary point.} Algorithm~\ref{alg:main} was terminated during the $k$th iteration with $x_k$ considered an approximate infeasible stationary point if $\|c_k\|_2 \geq 10^{-2}$ and $\|J_k^T c_k\|_2 \leq 10^{-12}$.
     \item \textbf{Gurobi error.} Algorithm~\ref{alg:main} was terminated during the $k$th iteration if the Gurobi solver failed to solve subproblem~\eqref{prob:tangential-qp} using its default tolerances. 
    \item \textbf{Maximum iterations.} Algorithm~\ref{alg:main} was terminated if $1000$ iterations was completed without terminating for any of the previous reasons.
\end{itemize}

For comparison purposes, we solve the same test problems using the Bazinga method. Bazinga is a safeguarded augmented Lagrangian framework that uses an inner subproblem solver called $\mathrm{PANOC^{+}}$, which is a linesearch algorithm that combines a forward-backward iteration and a quasi-Newton step over the forward-backward envelop of the objective function; see the Bazinga paper~\cite{de2023constrained} for more details.\footnote{The code package of Bazinga is downloaded from~\url{https://github.com/aldma/Bazinga.jl}.}
The Bazinga algorithm was terminated when one of the following conditions was satisfied.
\begin{itemize}
    \item \textbf{Approximate KKT point.} Bazinga was terminated if certain primal feasibility and dual stationarity measures were less than $10^{-6}$. Further details on the termination conditions of Bazinga can be found in~\cite[Section 3.3]{de2023constrained}.
    \item \textbf{Not a number.}  Bazinga was terminated if a NaN occurred.
    \item \textbf{Maximum iterations.} Bazinga was terminated if $100$ iterations was completed without terminating for any of the previous reasons. Fewer maximum iterations was allowed for Bazinga compared to Algorithm~\ref{alg:main} because each iteration of Bazinga is significantly more expensive compared to Algorithm~\ref{alg:main}.  See the end of Section~\ref{sec:results} and Appendix~\ref{sec:appendix} for a discussion and table of results concerning computational times, respectively.
\end{itemize}

\subsection{Test results}\label{sec:results}

In this subsection, we present the results of using our Algorithm~\ref{alg:main} and Bazinga to solve problems of the form~\eqref{prob:test} with test functions chosen as described in Section~\ref{sec:test-problems}. To see detailed results for each test problem, see Table~\ref{tab:appendix1} and Table~\ref{tab:appendix2} in Appendix~\ref{sec:appendix}.  In the remainder of this section, we discuss the key results and observations summarized in Table~\ref{tab:comparison-table}.

We begin by describing the meanings of the columns of Table~\ref{tab:comparison-table}, and discuss their corresponding values to compare the performances of Algorithm~\ref{alg:main} and Bazinga.
\begin{itemize}
    \item \textbf{Method.}
    The name of the method.  
    \item \textbf{Feasible.}
    The number of test problems for which the corresponding method terminated at a point with constraint violation no larger than $10^{-6}$. For this metric we see that the two methods behaved similarly, with Bazinga achieving approximate feasibility on one more test problem.
    \item \textbf{Feasible, Better Objective.}
    To understand the meaning of this column, let $\fus$ denote the final objective value returned by Algorithm~\ref{alg:main} and $\fBazinga$ denote the final objective value returned by Bazinga.  We can then define the relative difference in the returned objective function values as
\begin{equation}~\label{eq:relative-diff-obj}
f_{\text{diff}} 
:=\frac{\fBazinga - \fus}{\max(1,|\min(\fBazinga,\fus)|)}. 
\end{equation} 
We indicate that Algorithm~\ref{alg:main} (resp., Bazinga) had a better relative objective value if $f_{\text{diff}} \geq 10^{-5}$ (resp., $f_{\text{diff}} \leq -10^{-5}$).  Using this terminology, column ``Feasible, Better Objective" gives the number of test problems for which both algorithms terminated at a point with constraint violation less than $10^{-6}$ \emph{and} the corresponding method had a better relative objective value. For this metric we see that Algorithm~\ref{alg:main} significantly outperforms Bazinga in terms of final objective function values when both algorithms return vectors that satisfy the constraint violation tolerance.
\item \textbf{Performs Better}.
The number of test problems for which the corresponding method either (i) met the constraint violation tolerance and the other method did not, or (ii) both methods reached the constraint violation tolerance and the corresponding method had a better relative objective value (see~\eqref{eq:relative-diff-obj}). Algorithm~\ref{alg:main} significantly outperforms Bazinga on this metric.
\item $a$ \textbf{is Zero}
The number of test problems for which the  corresponding method returned $a=0$. Algorithm~\ref{alg:main} significantly outperformed Bazinga in this metric, with Algorithm~\ref{alg:main} (resp., Bazinga) returning $a = 0$ on $36$ (resp., $13$) of the test problems.
We conjecture that Bazinga's poor performance on this metric is due to its inner subproblem solver, which  sacrifices solution sparsity for faster convergence of its iterates by combining proximal-gradient calculations with quasi-Newton ideas (see~\cite{de2023constrained}).  We investigated the test problems that Algorithm~\ref{alg:main} did not return $a=0$ and a  Gurobi error was not encountered, and found that  
by increasing the regularization parameter, Algorithm~\ref{alg:main} would return solutions satisfying $a = 0$.

\item $a$ \textbf{is Small.}
The number of test problems for which the corresponding method returned $\|a\|_\infty \leq 10^{-5}$, thus indicating that $a$ was small (possibly equal to zero).  When comparing this column with column ``$a$ is Zero", we see that 
the only difference is that Algorithm~\ref{alg:main} returned a small (nonzero) value for $a$ on one additional test problem; the results for Bazinga were unchanged. 
\item \textbf{KKT Found.}
The number of test problems for which the corresponding method terminated with an approximate KKT point, as discussed in Section~\ref{sec:implementation}.  Algorithm~\ref{alg:main} computed an approximate KKT point on $33$ of the $46$ test problems.  
Algorithm~\ref{alg:main} encountered Gurobi errors (see Section~\ref{sec:implementation}) on test problems BT4 and HS56 that were related to large constraint violation values, which were caused by too large of an initial value for the merit parameter. These failures can be avoided by decreasing the initial value for the merit parameter, but we did not do that for the numerical tests presented.  
\end{itemize}

\begin{table}[ht!]
\label{tab:comparison-table}
\centering
\caption{Algorithm~\ref{alg:main} versus Bazinga on various performance metrics related to solving problem~\eqref{prob:test} with test functions given in Table~\ref{tab:appendix1}--Table~\ref{tab:appendix2}; see Section~\ref{sec:results} for the meaning of the columns.}
\begin{tabular}{|c|c|c|c|c|c|c|}
\hline\rule{0pt}{0.8\normalbaselineskip}
Method & Feasible & Feasible, & Performs & $a$ is & 
$a$ is & KKT \\
       &          & Better Objective & Better & Zero  & Small & Found  \\
\hline
Algorithm~\ref{alg:main} & 40	& 23 & 23 & 36 & 37	& 33 \\
\hline
Bazinga & 41 & 2 & 7 & 13 & 13 & 21 \\
\hline
\end{tabular}
\end{table}

Overall, we are pleased with the results of  Table~\ref{tab:comparison-table}.  We believe that they indicate that there is significant merit to our proposed algorithm, especially in terms of computing structured approximate solutions.  It is worth noting that we have not discussed computational time since  comparing our Python implementation of Algorithm~\ref{alg:main} with the Julia implementation of Bazinga gives an advantage to Bazinga (purely because of the programming language used).  Even still, one can observe from Table~\ref{tab:appendix1} and Table~\ref{tab:appendix2} that Algorithm~\ref{alg:main} requires less (often significantly less) computing time compared to Bazinga on nearly every test problem.






\section{Conclusion}\label{sec:conclusion}

We have presented one of the first proximal-gradient type methods that can handle nonlinear equality constraints, and effectively return structured solutions where the structure is determined by the choice of regularization function. In the future, it would be interesting to address inequality constraints, establish convergence results under weaker assumptions, and accelerate convergence by  incorporating Nesterov acceleration or subspace acceleration.

\bibliographystyle{siamplain}


\clearpage
\appendix

\section{Detailed Results for the Test Problems}\label{sec:appendix}

In this appendix we provide the detailed output from our Algorithm~\ref{alg:main} and Bazinga for the test problems in  Table~\ref{tab:appendix1} and Table~\ref{tab:appendix2}.  See Section~\ref{sec:numerical} for details on the problem formulation, the test functions used, and the implementation details.

The columns of Table~\ref{tab:appendix1} and Table~\ref{tab:appendix2} have the following meanings.
\begin{itemize}
\item \textbf{Problem.} The name of the test problem.  Specifically, the value in this column gives the name of the CUTEst test problem used to obtain the objective function $f$ and constraint function $c$ in the test problem formulation~\eqref{prob:test}.
\item \textbf{Method.} The name of the method used.
\item \textbf{Obj.} The value of the objective function in  problem~\eqref{prob:test} at the final iterate returned by the solver. 
\item \textbf{RE.} The relative error between the objective function value returned by the algorithm and the optimal objective function value.  In particular, if we let $(f+r)$ denote the objective function value returned by a solver on a particular problem and let  $(f+r)_{\text{opt}}$ denote the optimal objective value for that same problem (as determined by the CUTEst documentation), then we define the relative error for that method on that problem as
\begin{equation*}
\textbf{RE} = \frac{|(f+r) - (f+r)_{\text{opt}}|}{\max(1,|(f+r)_{\text{opt}}|)}.
\end{equation*}
\item $\boldsymbol{\|c(x)+a\|_2.}$ The value of $\|c(x) + a\|_2$ at the point returned by the solver. 
\item $\boldsymbol{\|a\|_\infty.}$ The value of $\|a\|_\infty$ at the point returned by the solver.
\item \textbf{Status.} A three letter string that indicates the outcome when the given method was used to solve the given test problem. In particular, the value ``Opt" means that the method returned a final iterate that was an \textbf{approximate KKT point} as described in Section~\ref{sec:implementation}. The value ``Max" indicates that the method reached its maximum allowed number of iterations as described under \textbf{Maximum Iterations} in Section~\ref{sec:implementation}. The value ``Err" only occurred for Algorithm~\ref{alg:main} and indicates that a Gurobi error occurred as described under \textbf{Gurobi error} in Section~\ref{sec:implementation}. Finally, the value ``NaN" only occurred for Bazinga and indicates that the data type not-a-number occurred.
\item \textbf{Time.} The computational time measured in seconds.
\end{itemize}



\begin{filecontents*}{output1.csv}

\end{filecontents*}



\begin{table}[ht!]
    \centering
    \csvreader[tabular=lccccccc, before table=\rowcolors{2}{red!25}{yellow!50}, table head=\hline\rowcolor{red!50!black}\color{white}Problem & \color{white}Method & \color{white}Obj & \color{white}RE & \color{white}$\|c(x)+a\|_2$ & \color{white} $\|a\|_{\infty}$ & \color{white}Status & \color{white}Time \\\hline,
head to column names]{output1.csv}{}%
{\Prob & \Method & \Obj & \Error & \Constr & \y & \Status & \Time}%
    \caption{Results for test problems BT11-HS39.}
    \label{tab:appendix1}
\end{table}

\begin{filecontents*}{output2.csv}

\end{filecontents*}



\begin{table}[ht!]
    \centering
    \csvreader[tabular=lccccccc, before table=\rowcolors{2}{red!25}{yellow!50}, table head=\hline\rowcolor{red!50!black}\color{white}Problem & \color{white}Method & \color{white}Obj & \color{white}RE & \color{white}$\|c(x)+a\|_2$ & \color{white} $\|a\|_{\infty}$ & \color{white}Status & \color{white}Time (s)\\\hline,
head to column names]{output2.csv}{}%
{\Prob & \Method & \Obj & \Error & \Constr & \y & \Status & \Time}%
    \caption{Results for test problems HS40-STREGNE.}
    \label{tab:appendix2}
\end{table}

\end{document}